\theoremstyle{theorem}
\newtheorem{theorem}{Theorem}
\theoremstyle{definition}
\newtheorem{remark}{Remark}
\newtheorem{lemma}{Lemma}
\newtheorem{coro}{Corollary}
\def\ni{\noindent}
\begin{document}
\title{Binary Compositions and Semi-Pell Compositions}
\date{}
\maketitle

\begin{center}
William J. Keith$^{1}$, Augustine O. Munagi$^{2}$\\
$^{1}$Department of Mathematical Sciences, Michigan Technological University \\
$^{2}$School of Mathematics, University of the Witwatersrand, Johannesburg\\
$^{1}$wjkeith@mtu.edu, $^{2}$augustine.munagi@wits.ac.za
\end{center}

\begin{abstract} In analogy with the semi-Fibonacci partitions studied recently by Andrews, we define semi-Pell compositions and semi-$m$-Pell compositions.  We find that these are in bijection with certain weakly unimodal $m$-ary compositions.  We give generating functions, bijective proofs, and a number of unexpected congruences for these objects.
\end{abstract}

\section{Introduction}\label{introd0}
A composition of a positive integer $n$ is an ordered partition of $n$, that is, any sequence of positive integers $(n_1,\ldots,n_k)$ such that $n_1 +\ldots + n_k =n$.
Compositions of $n$ will be represented as vectors with positive-integer entries.

Inspired by a recent paper of Andrews on semi-Fibonacci partitions \cite{An1}, we study the set $SP(n)$ of semi-Pell compositions, defined as follows: 

$SP(1)=\{(1)\},\,  SP(2)=\{(2)\}$.\\ 
If $n>2$ and $n$ is even then

$SP(n)=\{C\mid C\ \mbox{is a semi-Pell composition of $\frac{n}{2}$ with each part doubled}\}$.\\
If $n$ is odd, then a member of $SP(n)$ is obtained by inserting 1 at the beginning or the end of each composition in $SP(n-1)$, and by adding 2 to the single odd part in a composition in $SP(n-2)$. (Indeed it follows by induction that every semi-Pell composition with odd weight contains exactly one part odd part).\\
As an illustration we have the following sets for small $n$:

$SP(1)=\{(1)\}$

$SP(2)=\{(2)\}$

$SP(3)=\{(1,2),(2,1),(3)\}$

$SP(4)=\{(4)\}$

$SP(5)=\{(1,4),(4,1),(3,2),(2,3),(5)\}$

$SP(6)=\{(2,4),(4,2),(6)\}$

$SP(7)=\{(1,2,4),(2,4,1),(1,4,2),(4,2,1),(1,6),(6,1),(3,4),(4,3),(5,2),(2,5),(7)\}$

$\ldots{} \ldots{} \ldots{} \ldots{} $\\
Thus if we define $sp(n)=|SP(n)|$, we obtain that 

$sp(1)=sp(2)=1,\, sp(3)=3,\, sp(4)=1,\, sp(5)=5,\, sp(6)=3,\, sp(7)=11,\, sp(8)=1,$ 

$\ sp(9)=13,\ sp(10)=5, \ldots{}$.\\
Hence we see that $sp(n)=0$ if $n<0$ and $sp(0)=sp(1)=1$, and for $n>1$, the following recurrence holds:
\begin{equation}\label{eq00}
sp(n) = \begin{cases}
sp(n/2)& \mbox{if $n$ is even},\\
2\cdot sp(n-1)+sp(n-2)& \mbox{if $n$ is odd}.
\end{cases}
\end{equation}

The semi-Pell sequence $\{sp(n)\}_{n>0}$ occurs as sequence number A129095 in the Online Encyclopedia of Integer Sequences \cite{Sl}. However, there seems to be no connection of the sequence with compositions until now. The companion sequence A129096 records the fact that the bisection of the semi-Pell sequence $sp(2n-1),\, n>0$ is monotonically increasing: $sp(2n+3)=2sp(2n+2)+sp(2n+1)>sp(2n+1)$ for all $n\geq 0$.

A weakly unimodal composition (or stack) is defined to be any composition of the form $(a_1,a_2,\ldots,a_r,c,b_s,\ldots,b_1)$ such that
$$1\leq a_1\leq a_2\leq \cdots\leq a_r\leq c>b_s\geq \cdots\geq b_2\geq b_1.$$
The set of the $a_i$ or the $b_j$ may be empty. The study of these compositions was pioneered by Auluck \cite{Au} and Wright \cite{Wr} and continues to instigate research (see, for example \cite{An1,An2,Fr}). The ``concave" compositions studied by Andrews in \cite{An2} are weakly unimodal  compositions with unique largest parts.

We will associate the set of semi-Pell compositions with a class of restricted unimodal compositions into powers of 2. 

Binary compositions are compositions into powers of 2. Let $OC(n)$ be the set of weakly unimodal binary compositions of $n$ such that each part size occurs together, or ``in one place,'' an odd number of times. In other words every part size lies in a distinct `colony' (see also Munagi-Sellers \cite{MS}). For example members of $OC(45)$ include, using the frequency notation, $(16, 4^3, 2^3, 1^{11}),(2^5, 4, 8^3, 1^7), (1^7, 8, 16, 2^7)$, but the following weakly unimodal binary compositions of 45 do not belong to $OC(45)$: $(2^3,4^3,16,2,1^{9}), (1^3,2^5, 4, 8^3, 1^4)$.  
\medskip

\begin{lemma}\label{lem2} The enumeration function $oc(n)$ satisfies the recurrence:
\begin{equation}\label{eq01}
oc(n) = \begin{cases}
oc(n/2)& \mbox{if $n$ is even},\\
2\cdot oc(n-1)+oc(n-2)& \mbox{if $n$ is odd}.
\end{cases}
\end{equation}
\end{lemma}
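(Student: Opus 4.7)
The plan is to prove both cases of the recurrence by explicit bijection on $OC(n)$. For the even case, I would first establish a parity constraint: since every part $2^k$ that appears in a composition $C \in OC(n)$ does so with odd multiplicity $m_k$, writing $n = \sum_{k \geq 0} 2^k m_k$ shows that $n \equiv m_0 \pmod{2}$. Hence if $n$ is even then $m_0$ is both even and, if nonzero, odd; so $m_0 = 0$ and no member of $OC(n)$ contains the part $1$. The map halving every part then sends $OC(n)$ bijectively to $OC(n/2)$, with doubling as its inverse; weak unimodality, the single-colony condition, and the restriction to powers of $2$ are all preserved.

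For the odd case with $n \geq 3$, the same parity argument forces $m_0$ to be odd, so every $C \in OC(n)$ contains the part $1$ an odd number of times, all in one colony. Since $1$ is the smallest possible part and $C$ is weakly unimodal, this $1$-colony must lie at the extreme left of $C$, at the extreme right, or constitute the entire composition. I would partition $OC(n)$ into the $C$ with exactly one $1$ at the left end, those with exactly one $1$ at the right end, and those with at least three $1$'s. Removing the single $1$ gives a bijection of each of the first two classes with $OC(n-1)$, with the side recorded as a label $L$ or $R$; removing two $1$'s from the existing $1$-colony gives a bijection of the third class with $OC(n-2)$. Since $OC(n-1)$ contains no $1$'s by the even-case observation, the inverse maps (prepend a $1$, append a $1$, or insert two $1$'s into the existing $1$-colony of an element of $OC(n-2)$) are well-defined and recover the partition. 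Summing yields $oc(n) = 2\,oc(n-1) + oc(n-2)$.

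The main technical point is verifying that all four defining conditions (binary parts, weak unimodality, single-colony occurrence, odd multiplicity) are preserved under each insertion or deletion, with a bit of extra care for the degenerate all-ones composition $(1^{2j+1})$ whose $1$-colony spans the whole composition. None of these checks is deep, but they must be done case by case; the hardest subtlety is confirming that the location of the $1$-colony of a member of $OC(n-2)$ is uniquely determined, so that the inverse insertion map is unambiguous.
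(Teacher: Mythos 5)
Your proposal is correct and follows essentially the same route as the paper: the even case via halving/doubling (no part $1$ can occur when $n$ is even) and the odd case via the three-way split corresponding to prepending a $1$, appending a $1$, or inserting two $1$'s into the unique $1$-colony of an element of $OC(n-2)$. The paper states this decomposition more tersely; your parity observation $n\equiv m_0 \pmod 2$ and the explicit disjointness/exhaustiveness checks simply supply the details the paper leaves implicit.
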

\begin{proof}
The recurrence is obtained in a similar manner to that of $sp(n)$.

$OC(1)=\{(1)\},\, OC(2)=\{(2)\}$.

If $n$ is even and $n>2$, then $OC(n)$ is obtained by doubling each member of $OC(n/2)$.

If $n$ is odd, then $OC(n)$ is obtained from the union of the two sets: (i) set of compositions obtained by inserting 1 before or after each composition in $OC(n-1)$; and (ii) set of compositions obtained by inserting two 1's to the single cluster of 1's in each composition in $OC(n-2)$. Thus, for example we have

$OC(1): (1)$

$OC(2): (2)$

$OC(3): (1,2),(2,1),(1^3)$

$OC(4): (4)$

$OC(5): (1,4),(4,1),(1^3,2),(2,1^3),(1^5)$

$\ldots {}\ldots {}\ldots {}\ldots {}$\\
The result follows.
\end{proof}

The equality of initial conditions and recurrences immediately gives the equality of $sp(n)$ and $oc(n)$ and raises the natural questions of a bijective proof, and of their common generating function.

\begin{theorem}\label{thm2} For integers $n\geq 0$,
\begin{equation}\label{eq2}
sp(n) = oc(n).
\end{equation}
Their common generating function is $$\sum_{n=0}^\infty sp(n) x^n = 1+ \sum_{i=0}^\infty \frac{x^{2^i}}{1-x^{2^{i+1}}}\prod\limits_{t=0}^{i-1}\left(1+\frac{2x^{2^t}}{1-x^{2^{t+1}}}\right)= \sum_{n=0}^\infty oc(n)x^n.$$
\end{theorem}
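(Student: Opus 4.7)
The equality $sp(n) = oc(n)$ is immediate from the definitions: $sp(n)$ satisfies the recurrence (\ref{eq00}) with initial values $sp(0)=sp(1)=1$, and Lemma \ref{lem2} establishes the same recurrence (\ref{eq01}) for $oc(n)$ with the same initial values, so a one-line induction suffices. The substantive task is to derive the explicit generating function, and for that I would work entirely on the $OC$ side, where the structural description of the objects is most transparent.

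The plan is to enumerate $OC(n)$ by classifying each nonempty composition according to the size of its largest part. Suppose that largest part is $2^i$. By hypothesis the power $2^i$ lies in a single cluster of odd length, and this cluster serves as the unique peak of the unimodal composition. To the left of the peak sits an ascending sequence of clusters of powers strictly less than $2^i$, and symmetrically to the right sits a descending sequence; each such cluster has odd length, and each smaller part size that appears does so in exactly one of the two sides.

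Translating this decomposition into generating functions is then mechanical. The peak, summed over odd multiplicities $2k+1 \geq 1$, contributes
\[
\sum_{k \geq 0} x^{(2k+1) 2^i} \;=\; \frac{x^{2^i}}{1 - x^{2^{i+1}}}.
\]
For each $t \in \{0,1,\ldots,i-1\}$, the power $2^t$ is either absent from the composition (factor $1$) or is present with some odd multiplicity on either the left or the right of the peak (factor $2 \cdot \frac{x^{2^t}}{1 - x^{2^{t+1}}}$), for a combined factor of $1 + \frac{2 x^{2^t}}{1 - x^{2^{t+1}}}$. Summing the resulting products over $i \geq 0$, and adding $1$ for the empty composition counted by $oc(0)$, produces exactly the formula stated in the theorem.

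I do not anticipate a real obstacle: the argument is a direct structural enumeration. The point that requires the most care is justifying the factor $2$ inside the product while having no such factor on the peak. This reflects the combinatorial fact that the peak power $2^i$ is forced to occupy the middle position, so its cluster carries no left/right choice, whereas every strictly smaller power that is used must be placed entirely on one side of the peak. Once this is made explicit, everything else is the standard identity $\sum_{k \geq 0} x^{(2k+1) 2^t} = x^{2^t}/(1 - x^{2^{t+1}})$ for odd multiplicities of a fixed power, and assembling the factors gives the claimed closed form.
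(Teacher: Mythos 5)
Your proposal is correct, but it reaches the theorem by a different route than the paper. You dispose of $sp(n)=oc(n)$ by matching the recurrence \eqref{eq00} against Lemma \ref{lem2} with the common initial values — this is exactly the observation the paper makes in the sentence preceding the theorem, so it is legitimate — and you then obtain the closed form by a direct structural enumeration of $OC(n)$: peak cluster $2^i$ with odd multiplicity giving $\frac{x^{2^i}}{1-x^{2^{i+1}}}$, and each smaller power $2^t$ either absent or placed (with odd multiplicity) wholly on the left or wholly on the right of the peak, giving $1+\frac{2x^{2^t}}{1-x^{2^{t+1}}}$. The paper, by contrast, does the heavy lifting on the $SP$ side: it converts the recurrence \eqref{eq00} into the functional equation $P(x)+\frac{x}{1-x^2}=\frac{1+2x-x^2}{1-x^2}P(x^2)$ and iterates it to produce the series, dismissing the $OC$ interpretation of the resulting sum in one sentence (``the $i$ term counts $OC$ compositions with largest part $2^i$''); it then separately gives an explicit part-by-part bijection $SP(n)\leftrightarrow OC(n)$. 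Your version is arguably the more economical and self-contained proof of the statement as written — the structural enumeration you spell out (including the justification of the factor $2$ for the non-peak sizes and its absence at the peak, which is the one point needing care and which you handle correctly) is more direct than the functional-equation iteration, which the paper itself only presents heuristically before verifying the answer against the functional equation. What your route does not deliver is the extra content the paper's proofs provide: the functional equation satisfied by $P(x)$, and the explicit bijection between the two families, which the paper uses again in the general $m$ case and in later structural arguments.
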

\begin{proof} We first prove the generating function claim and then give the bijective proof.

\noindent First Proof (generating functions): Let $P(x)=\sum\limits_{n\geq 0}sp(n)x^n$. Then
\begin{align*}
P(x)&=\sum_{n\geq 0}sp(2n)x^{2n} + \sum_{n\geq 0}sp(2n+1)x^{2n+1}\\
&=\sum_{n\geq 0}sp(2n)x^{2n} + 2\sum_{n\geq 1}sp(2n)x^{2n+1} + \sum_{n\geq 1}sp(2n-1)x^{2n+1} + x\\
&=\sum_{n\geq 0}sp(n)x^{2n} + 2\sum_{n\geq 1}sp(n)x^{2n+1} + \sum_{n\geq 0}sp(2n+1)x^{2n+3} + x\\
&=P(x^2) + 2x(P(x^2)-1) + x + x^2\sum_{n\geq 0}sp(2n+1)x^{2n+1}.
\end{align*}
Eliminating the last sum by the first equality, 
$$P(x) =P(x^2) + 2xP(x^2) - x + x^2(P(x)-P(x^2))$$
$$\implies P(x) + \frac{x}{1-x^2} = \frac{1+2x-x^2}{1-x^2}P(x^2).$$
To iterate the last equation, we begin by denoting both sides by $P_1(x)$ and obtain
$$P_1(x) = \frac{1+2x-x^2}{1-x^2}\left(P_1(x^2)- \frac{x^2}{1-x^4}\right),$$
which gives 
$$P_1(x)+\frac{(1+2x-x^2)x^2}{(1-x^2)(1-x^4)} = \frac{1+2x-x^2}{1-x^2}P_1(x^2).$$
Denoting both sides of the last equation by $P_2(x)$ we obtain
$$P_2(x) = \frac{1+2x-x^2}{1-x^2}\left(P_2(x^2)- \frac{(1+2x^2-x^4)x^4}{(1-x^4)(1-x^8)} \right),$$
which gives
$$P_2(x) + \frac{(1+2x-x^2)(1+2x^2-x^4)x^4}{(1-x^2)(1-x^4)(1-x^8)} = \frac{1+2x-x^2}{1-x^2}P_2(x^2).$$
Similarly,
$$P_3(x) + \frac{(1+2x-x^2)(1+2x^2-x^4)(1+2x^4-x^8)x^8}{(1-x^2)(1-x^4)(1-x^8)(1-x^{16})} = \frac{1+2x-x^2}{1-x^2}P_3(x^2).$$
In general we have, 
$$P_i(x) + \frac{x^{2^i}}{1-x^{2^{i+1}}}\prod\limits_{r=0}^{i-1}\frac{(1+2x^{2^r}-x^{2^{r+1}})}{(1-x^{2^{r+1}})} = \frac{1+2x-x^2}{1-x^2}P_i(x^2),\ i>0.$$

This suggests that $P(x)$ is given by the limiting value of $P_i(x)$ as $i$ tends to infinity, plus a constant 1 to suit our initial conditions:

$$P(x) = 1+ \lim\limits_{i\rightarrow\infty} P_i(x) = 1+ \sum_{i=0}^\infty \frac{x^{2^i}}{1-x^{2^{i+1}}}\prod\limits_{t=0}^{i-1}\left(1+\frac{2x^{2^t}}{1-x^{2^{t+1}}}\right)= \sum_{n=0}^\infty oc(n)x^n.$$

That this is the generating function for $oc(n)$ is immediate, as the $i$ term counts $OC$ compositions with largest part $2^i$. 

Additionally, it may be verified algebraically that the generating function $P(x)$ satisfies the functional equation  
$$P(x) + \frac{x}{1-x^2} = \frac{1+2x-x^2}{1-x^2}P(x^2)$$ 
\noindent that was constructed for $sp(n)$ as well.

\vskip 5pt

\ni Second Proof (combinatorial bijection). Each part $t$ of $\lambda\in SP(n)$ can be expressed as $t=2^i\cdot h,\, i\geq 0$, where $h$ is odd. Now transform $t$ as follows: 

if $i>0$, i.e., $t$ is even, then $t=2^i\cdot h\longmapsto 2^i,2^i,\ldots,2^i (h\ \mbox{times})$;

if $i=0$, i.e., $t$ is odd and occurs as a first or last part of $\lambda$, then $t=h\longmapsto 1,1,\ldots,1 (h\ \mbox{times})$.

This gives a unique binary composition in $OC(n)$ provided we retain the cluster of $2^i$'s or 1's corresponding to each $t$ in consecutive positions, and we do not re-order the parts of the resulting binary composition.
(In other words, each (possibly repeated) part-size appears in exactly one place in the image).

For the inverse map we simply write each $\beta\in OC(n)$ in the one-place exponent notation, by replacing every $r$ consecutive equal parts $x$ with $x^r$, to get $\beta=(\beta_1^{u_1},\ldots,\beta_s^{u_s})$, with the $u_i$ odd and positive, and containing at most one instance of a 1-cluster which may be $\beta_1^{u_1}$ or $\beta_s^{u_s}$. Since each $\beta_i^{u_i}$ has the form $(2^{j_i})^{u_i},\, j_i\geq 0$, we apply the transformation:
$$\beta_i^{u_i}=(2^{j_i})^{u_i}\longmapsto 2^{j_i}u_i.$$
This gives a unique composition in $SP(n)$ provided that the resulting parts retain their relative positions. Indeed the image may contain at most one odd part which occurs precisely when $j_i=0$.
\end{proof}

As an illustration of this bijection note that the 13 members of $SP(9)$ and the 13 members of $OC(9)$ listed below correspond one-to-one under the bijection.

\ni Members of $SP(9)$:

$(1,8),(8,1),(3,2,4),(2,4,3),(3,4,2),(4,2,3),(3,6),(6,3),(5,4),(4,5),(7,2),(2,7),(9)$\\
\ni Members of $OC(9)$:

$(1, 8), (8, 1), (1^3, 2, 4), (2, 4, 1^3), (1^3, 4, 2), (4, 2, 1^3),(1^3, 2^3),(2^3, 1^3), (1^5, 4), (4, 1^5),(1^7, 2),$ 

$(2, 1^7), (1^9)$.
\bigskip

The following result is easily deduced from the first part of the relation \eqref{eq00}.
\begin{coro}\label{coro1} Given any positive nonnegative integer $v$, then
 $$sp(2^j(2v+1))=sp(2v+1)\ \forall\ j\geq 0.$$

In particular $\, sp(2^j)=1\ \forall\ j\geq 0.$
\end{coro}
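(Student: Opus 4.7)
The plan is to proceed by straightforward induction on $j$, using only the even case of the recurrence \eqref{eq00}, namely $sp(n) = sp(n/2)$ when $n$ is even.

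First I would fix a nonnegative integer $v$ and set $n_j = 2^j(2v+1)$. The base case $j=0$ is the tautology $sp(2v+1) = sp(2v+1)$. For the inductive step, assuming $j \geq 1$, observe that $n_j = 2^j(2v+1)$ is even because $j \geq 1$, and that $n_j/2 = 2^{j-1}(2v+1) = n_{j-1}$. Applying the even branch of the recurrence gives
\[
sp(n_j) = sp(n_j/2) = sp(n_{j-1}),
\]
and by the induction hypothesis $sp(n_{j-1}) = sp(2v+1)$, completing the step.

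The ``in particular'' claim follows by specializing to $v = 0$, so that $2v+1 = 1$ and $sp(2^j) = sp(1) = 1$ by the stated initial condition.

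There is no real obstacle here: the corollary is an immediate unwinding of the halving branch of \eqref{eq00}, and the only thing to check carefully is that the recurrence applies at each step, which is guaranteed by the parity of $2^j(2v+1)$ for $j \geq 1$.
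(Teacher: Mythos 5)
Your proof is correct and matches the paper's intent: the paper gives no detailed argument, merely noting that the corollary is "easily deduced from the first part of the relation \eqref{eq00}", which is exactly the halving-branch induction you carry out. Nothing further is needed.
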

\medskip

Parities of $sp(n)$ and $n$ agree at odd values of $n$ modulo 4.
\begin{theorem}\label{congm2} Given any nonnegative integer $n$, then
$$sp(2n+1)\equiv 2n+1\pmod{4}.$$
\end{theorem}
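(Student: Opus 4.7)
The plan is to reduce the claim to the auxiliary statement that $sp(n)$ is odd for every $n\geq 1$, and then prove the congruence by induction on $n$.

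First I would combine the two parts of the recurrence (\ref{eq00}). For $n\geq 1$, the odd case gives $sp(2n+1)=2\,sp(2n)+sp(2n-1)$, and the even case (applicable because $2n\geq 2$) gives $sp(2n)=sp(n)$. Substituting yields the key identity
\[
sp(2n+1) = 2\,sp(n)+sp(2n-1), \qquad n\geq 1.
\]
Modulo $4$, this reads $sp(2n+1)\equiv sp(2n-1)+2\,sp(n)\pmod 4$, so if one knows $sp(2n-1)\equiv 2n-1\pmod 4$ by induction, then the statement $sp(2n+1)\equiv 2n+1\pmod 4$ is equivalent to $2\,sp(n)\equiv 2\pmod 4$, i.e.\ to $sp(n)$ being odd.

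I would therefore isolate the parity fact as a short lemma: $sp(n)$ is odd for every $n\geq 1$. This goes by strong induction using (\ref{eq00}): the base $sp(1)=1$ is odd; if $n=2k$ with $k\geq 1$, then $sp(n)=sp(k)$ is odd by the inductive hypothesis; and if $n=2k+1>1$, then $sp(n)=2\,sp(2k)+sp(2k-1)$ is the sum of an even number and (by induction) an odd number, so $sp(n)$ is odd.

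With that lemma in hand, the main proof is a clean induction on $n$. The base case $n=0$ gives $sp(1)=1\equiv 1\pmod 4$. For the inductive step with $n\geq 1$, combine the displayed identity with the inductive hypothesis $sp(2n-1)\equiv 2n-1\pmod 4$ and with $sp(n)$ odd to get
\[
sp(2n+1)\equiv (2n-1)+2\cdot 1 = 2n+1 \pmod 4,
\]
completing the proof. I do not expect any serious obstacle; the only mildly subtle point is that the even recurrence is used in the form $sp(2n)=sp(n)$ with $n\geq 1$, which is exactly the range needed to derive the compressed recursion above.
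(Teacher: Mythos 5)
Your proposal is correct and follows essentially the same route as the paper: the paper also isolates the parity lemma ($sp(n)$ odd for all $n$) and then proves the congruence by induction via $sp(2n+1)=2\,sp(2n)+sp(2n-1)=2\,sp(n)+sp(2n-1)$. The only cosmetic difference is that the paper states the parity lemma without the explicit strong-induction write-up you give (it notes it follows by easy induction or by a pairing of $OC$ compositions), so there is nothing to change.
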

\ni This theorem implies that

$sp(4n+1)\equiv 1\pmod{4},\ n\geq 0$\\
and
 
$sp(4n+3)\equiv 3\pmod{4},\ n\geq 0$.
\vskip 5pt

\ni The proof requires the following lemma which may be established by an easy induction argument, or by observing that any $n$ has exactly one OC composition into exactly one size of power of 2, and all other OC compositions may be paired by whether they have their smallest part on the left or right.

\begin{lemma}\label{lemm2} $sp(n)$ is odd for all integers $n\geq 0$.
\end{lemma}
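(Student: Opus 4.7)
The plan is to argue by strong induction on $n$, directly using the recurrence (\ref{eq00}) satisfied by $sp$. The base cases $sp(0)=sp(1)=1$ are both odd. For the inductive step I split on the parity of $n$. When $n$ is even, $sp(n)=sp(n/2)$, which is odd by the inductive hypothesis. When $n$ is odd and $n\geq 3$, $sp(n)=2\,sp(n-1)+sp(n-2)\equiv sp(n-2)\pmod{2}$; since $n-2\geq 1$, the inductive hypothesis applies and gives that $sp(n-2)$ is odd, hence so is $sp(n)$. The case $n=1$ is already covered by the base, so the induction closes.

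Alternatively, one can carry out the bijective argument sketched by the authors, working on the side of $OC(n)$ since $oc(n)=sp(n)$ by Theorem~\ref{thm2}. Writing $n=2^{j}u$ with $u$ odd, there is exactly one member of $OC(n)$ that uses a single part size, namely $((2^{j})^{u})$. Every other element of $OC(n)$ uses at least two distinct part sizes; because such a composition is weakly unimodal and each size appears as a single cluster, the unique cluster of smallest size must occupy one of the two ends (an interior placement would force repetition of smaller values on both shoulders, contradicting the one-cluster-per-size rule). The map that moves this smallest cluster to the opposite end is then a fixed-point-free involution on the subset of $OC(n)$ with at least two distinct sizes, so $oc(n)$ has the same parity as $1$.

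Neither approach presents a real obstacle. In the inductive version the only care needed is to notice that the odd case of the recurrence applies starting at $n=3$, with $n=1$ absorbed into the base; in the combinatorial version the one subtlety is verifying that the smallest cluster cannot sit strictly between two larger clusters on either side, which follows immediately from unimodality together with the condition that each part size occurs in exactly one place.
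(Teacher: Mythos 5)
Your proof is correct and matches the paper, which sketches exactly these two arguments: an easy induction on the recurrence \eqref{eq00}, and the pairing of $OC(n)$ compositions with at least two part sizes by whether the smallest cluster sits at the left or right end, leaving the unique single-size composition unpaired. You have simply filled in the details the authors left implicit.
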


\begin{proof}[Proof of Theorem \ref{congm2}] By induction on $n$. Note that $sp(1)=1$ and $sp(3)=|\{(1,2),(2,1),(3)\}|=3$. So the assertion holds for $n=0,1$. Assume that $sp(2j+1)\equiv 2j+1\pmod{4}$ for all $j<n$. Then 

$sp(2n+1) = 2sp(2n) + sp(2n-1) = 2sp(n) + sp(2n-1)$.\\
Then by Lemma \ref{lemm2} $sp(n)$ is odd, say $2u+1$, and the inductive hypothesis shows that $sp(2n-1)\equiv 2n-1\pmod{4}$. Hence

$sp(2n+1) = 2(2u+1) + 2n-1+4t = 4(u+t) + 2n+1\equiv 2n+1\pmod{4}$. 
\end{proof}

Andrews in \cite{An1} denotes by $ob(n)$ the number of partitions of $n$ into powers of 2, in which each part size appears an odd number of times.  Theorem \ref{congm2} has the corollary that, for $n \equiv
2i+1 \pmod{4}$, the number of these in which exactly 2 part sizes
appear is congruent to $i$ mod 2, for each of these correspond to exactly two compositions enumerated by $oc(n)$ by reordering, while every $n$ has 1 additional such partition (and composition) into exactly 1 part size, and those into three or more part sizes correspond to a multiple of four such compositions.

In Section \ref{general} we obtain extensions of semi-Pell compositions to semi-$m$-Pell compositions and establish the corresponding extension of Theorem \ref{thm2}. We also give an alternative characterization of semi-$m$-Pell compositions in Section \ref{struct}. Then in Section \ref{pties} we prove some congruences satisfied by the enumeration function of these compositions.

\section{The Semi-$m$-Pell Compositions}\label{general}
We generalize the set of semi-Pell compositions to the set $SP(n,m)$ of semi-$m$-Pell compositions as follows:

$SP(n,m)=\{(n)\},\ n=1,2,\ldots,m$.\\
If $n>m$ and $n$ is a multiple of $m$, then 

$SP(n,m)=\{\lambda\mid \lambda\ \mbox{is a composition of $\frac{n}{m}$ with each part multiplied by}\ m\}$.\\ 
If $n$ is not a multiple of $m$, that is, $n\equiv r\pmod{m},\, 1\leq r\leq m-1$, then $SP(n,m)$ arises from two sources: first, compositions obtained by inserting $r$ at the beginning or at the end of each composition in $SP(n-r,m)$, and second, compositions obtained by adding $m$ to the single part of each composition $\lambda \in SP(n-m,m)$ which is congruent to $r\pmod{m}$. (Note that $\lambda$ contains exactly one part which is congruent to $r$ modulo $m$, see Lemma \ref{lem001} below).

\begin{lemma}\label{lem001}
Let $\lambda\in SP(n,m)$. 

If $m\mid n$, then every part of $\lambda$ is a multiple of $m$. 

If $n\equiv r\pmod{m},\, 1\leq r <m$, then $\lambda$ contains exactly one part $\equiv r\pmod{m}$.
\end{lemma}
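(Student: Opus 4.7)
The plan is to prove both assertions simultaneously by strong induction on $n$, following the recursive construction of $SP(n,m)$. The base cases $1 \leq n \leq m$ are immediate: $SP(n,m) = \{(n)\}$ consists of a single composition with one part, so if $n = m$ this part is a multiple of $m$, and if $n = r$ with $1 \leq r < m$ this part is the sole part congruent to $r$ mod $m$.

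For the inductive step, assume $n > m$ and that both assertions hold for all smaller positive indices. If $m \mid n$, then each $\lambda \in SP(n,m)$ is obtained by multiplying every part of some $\mu \in SP(n/m, m)$ by $m$, so every part of $\lambda$ is a multiple of $m$; the inductive hypothesis on $\mu$ is not actually needed here. If instead $n \equiv r \pmod{m}$ with $1 \leq r < m$, then $\lambda$ arises in one of two ways. In case (a), $\lambda$ is obtained by appending or prepending $r$ to some $\mu \in SP(n-r, m)$; since $m \mid (n-r)$, the inductive hypothesis gives that every part of $\mu$ is a multiple of $m$, so the inserted $r$ is the only part of $\lambda$ not congruent to $0$ mod $m$, and it is congruent to $r$. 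In case (b), $\lambda$ is obtained by adding $m$ to the (by inductive hypothesis unique) part of some $\mu \in SP(n-m, m)$ congruent to $r$ mod $m$; since $n - m \equiv r \pmod{m}$ and $n - m \geq r \geq 1$, the inductive hypothesis applies to $\mu$, giving exactly one such part in $\mu$, and replacing that part by itself plus $m$ preserves its residue while leaving every other part unchanged.

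The argument is largely bookkeeping, and I foresee no genuine obstacle. The only point worth flagging is that the recursive definition of $SP(n,m)$ for $n \equiv r \pmod{m}$ with $r \neq 0$ already refers to ``the single part congruent to $r$ mod $m$'' of a member of $SP(n-m,m)$; this phrasing presupposes the very uniqueness statement being proved, so the induction simultaneously proves the lemma and certifies the well-definedness of the recursion. For this reason I would make the ``exactly one'' clause of the inductive hypothesis do the work in case (b): it is what licenses both the existence and the uniqueness of the part being incremented.
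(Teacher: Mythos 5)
Your proposal is correct and follows essentially the same route as the paper: induction on $n$ tracking the recursive construction, with the $m\mid n$ case handled directly by the definition, the insertion case using that $SP(n-r,m)$ consists of multiples of $m$, and the add-$m$ case invoking the uniqueness clause of the inductive hypothesis. Your remark that the induction also certifies the well-definedness of the recursion is a fair observation that the paper only gestures at, but the substance of the argument is the same.
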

\begin{proof}
If $m\mid n$, the parts of a composition in $SP(n,n)$ are clearly divisible by $m$ by construction. 

For induction note that $SP(r,m)=\{(r)\},\, r=1,\ldots,m-1$, so the assertion holds trivially. Assume that the assertion holds for the compositions of all integers $<n$ and consider $\lambda\in SP(n,m)$ with $1\leq r<m$. Then $\lambda$ may be obtained by inserting $r$ at the beginning or end of a composition $\alpha\in SP(n-r,m)$. Since $\alpha$ consists of multiples of $m$ (as $m|(n-r)$), $\lambda$ contains exactly one part $\equiv r\pmod{m}$. Alternatively $\lambda$ is obtained by adding $m$ to the single part of a composition $\beta\in SP(n-m,m)$ which is $\equiv r\pmod{m}$. Indeed $\beta$ contains exactly one such part by the inductive hypothesis. Hence the assertion is proved.
\end{proof}

\ni As an illustration we have the following sets for small $n$ when $m=3$:

$SP(1,3)=\{(1)\}$

$SP(2,3)=\{(2)\}$

$SP(3,3)=\{(3)\}$

$SP(4,3)=\{(1,3),(3,1),(4)\}$

$SP(5,3)=\{(2,3),(3,2),(5)\}$

$SP(6,3)=\{(6)\}$

$SP(7,3)=\{(1,6),(6,1),(4,3),(3,4),(7)\}$

$SP(8,3)=\{(2,6),(6,2),(5,3),(3,5),(8)\}$

$SP(9,3)=\{(9)\}$

$SP(10,3)=\{(1, 9), (9, 1), (4, 6), (6, 4), (7, 3), (3, 7), (10)\}$\\
Thus if we define $sp(n,m)=|SP(n,m)|$, we see that the following recurrence relation holds:

$sp(n,m)=0$ if $n<0$, $sp(0,m)=1$ and $sp(n,m)=1,\, 1\leq n\leq m-1$. Then for $n\geq m$,

\begin{equation}\label{eq1}
sp(n,m) = \begin{cases}
sp(n/m,m)& \mbox{if}\ n\equiv 0\pmod{m},\\
2\cdot sp(n-r,m)+sp(n-m,m)& \mbox{if}\ n\equiv r\pmod{m},0<r<m.
\end{cases}
\end{equation}
The case $m=2$ gives the function considered earlier: $sp(n,2)=sp(n)$.

We will associate the set of semi-$m$-Pell compositions with a class of restricted unimodal compositions into powers of $m$.
  
Let $oc(n,m)$ denote the number weakly unimodal $m$-power compositions of $n$ in which every part size occurs in one place with multiplicity not divisible by $m$. Thus for example, the following are some objects enumerated by 
$oc(92,3)$: $(27^2, 9^2, 3^2, 1^{14}), (1^8, 3^{13}, 9^2, 27)$ and $(3^{14}, 9, 27, 1^{14})$.

\begin{theorem}\label{thm1} For integers $n\geq 0,m>1$,
\begin{equation}\label{eq2}
sp(n,m) = oc(n,m),
\end{equation}
\end{theorem}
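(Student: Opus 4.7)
My plan is to mirror the bijective proof of Theorem \ref{thm2}, replacing $2$ by $m$ throughout. Define $\phi: SP(n,m) \to OC(n,m)$ by writing each part of $\lambda=(t_1,\ldots,t_k) \in SP(n,m)$ uniquely as $t_j = m^{i_j}h_j$ with $m \nmid h_j$, and replacing $t_j$ by $h_j$ consecutive copies of $m^{i_j}$. The candidate inverse $\psi$ takes $\beta \in OC(n,m)$ in its one-place exponent notation $(a_1^{u_1},\ldots,a_k^{u_k})$ (where $a_j = m^{i_j}$ and $u_j \not\equiv 0 \pmod{m}$) to $(a_1 u_1, \ldots, a_k u_k)$. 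Weight preservation is immediate, so the real content is verifying that $\phi$ lands in $OC(n,m)$ and that $\psi$ lands in $SP(n,m)$.

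The key step is a structural lemma for $\lambda \in SP(n,m)$: (a) the $m$-adic valuations $i_1,\ldots,i_k$ of its parts are pairwise distinct; (b) the sequence $(i_1,\ldots,i_k)$ is strictly unimodal; (c) when $n \equiv r \pmod{m}$ with $0 < r < m$, exactly one $i_j$ equals $0$ (namely the unique part $\equiv r \pmod{m}$ furnished by Lemma \ref{lem001}), and that $j$ is either $1$ or $k$. I would prove (a)--(c) simultaneously by induction on $n$, mirroring the three cases of the recursive definition: when $m \mid n$, multiplying every part by $m$ shifts all valuations up by $1$, preserving distinctness and unimodality; when $n \not\equiv 0 \pmod{m}$ and $\lambda$ comes from inserting $r$ at an end of some $\mu \in SP(n-r,m)$ (whose parts are all divisible by $m$ by Lemma \ref{lem001}, hence have valuations $\geq 1$), we simply prepend or append a valuation $0$ to a strictly unimodal sequence of positive integers; and when $\lambda$ comes from adding $m$ to the unique part of $\mu \in SP(n-m,m)$ congruent to $r$, that part's valuation stays at $0$ (since $r+km \not\equiv 0 \pmod{m}$), so the valuation sequence is identical to $\mu$'s and all inductive claims transfer. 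From (a)--(c) it is immediate that $\phi(\lambda) \in OC(n,m)$: each power of $m$ appears in a single cluster by (a), the composition is weakly unimodal by (b), and the multiplicities $h_j$ are not divisible by $m$ by construction.

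To show $\psi$ lands in $SP(n,m)$, one option is to run the same induction backwards using the analogous recursive structure on $OC(n,m)$. A cleaner alternative, mirroring Lemma \ref{lem2}, is to prove directly that $oc(n,m)$ satisfies recurrence \eqref{eq1}: when $m \mid n$, no $\beta \in OC(n,m)$ contains any $1$'s (otherwise a cluster of $1$'s with multiplicity $u \not\equiv 0 \pmod{m}$ forces $n \not\equiv 0$), and dividing every part by $m$ gives a bijection with $OC(n/m,m)$; when $n \equiv r \pmod{m}$ with $0 < r < m$, every $\beta$ has a $1$-cluster of multiplicity $u \equiv r \pmod{m}$ at a unimodal extreme, and splitting on $u = r$ versus $u \geq r+m$ yields $2\cdot oc(n-r,m) + oc(n-m,m)$ respectively (the factor of $2$ from left/right placement of a $1$-cluster of size exactly $r$, and the bijection with $OC(n-m,m)$ in the other case by stripping $m$ ones from the cluster). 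The main obstacle I expect is property (b): careful verification that each inductive move preserves strict unimodality of the valuation sequence, which in turn hinges on the positional assertion (c) that the residue-$r$ part always sits at an end. Once that is secured, the remainder of the argument is routine bookkeeping.
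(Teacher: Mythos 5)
Your argument is correct, and the core of it is the same bijection as the paper's second proof of this theorem: write $t=m^i h$ with $m\nmid h$ and expand to $h$ consecutive copies of $m^i$, with the clusterwise inverse. The difference lies in the scaffolding. The paper justifies well-definedness by appealing to Theorem \ref{maxmy} (the full characterization of $SP(n,m)$ by distinct, unimodal max $m$-powers, proved later via the operations $\tau_1,\tau_2,\tau_3$ and Lemmas \ref{lemch1}--\ref{lemch2}), whereas you prove only the direction actually needed for $\phi$ --- distinctness and unimodality of the valuations, with the residue-$r$ part at an end --- by a direct induction tracking the three moves of the recursive construction; this is self-contained, avoids the forward reference, and your three inductive cases (shift all valuations by $1$; attach a valuation-$0$ part at an end of an all-multiples-of-$m$ composition, using Lemma \ref{lem001}; add $m$ to the unique residue-$r$ part, leaving the valuation sequence unchanged) are all sound. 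Your ``cleaner alternative'' for the inverse, proving that $oc(n,m)$ satisfies recurrence \eqref{eq1}, is the $m$-ary generalization of Lemma \ref{lem2}, which the paper states only for $m=2$ and otherwise handles by generating functions; your case analysis (no $1$'s when $m\mid n$, so divide by $m$; otherwise a unique $1$-cluster of multiplicity $u\equiv r$ at an end, split into $u=r$ giving $2\,oc(n-r,m)$ and $u\geq r+m$ giving $oc(n-m,m)$) is correct, with the small checks that prepended and appended images are distinct (only one $1$-cluster, and $OC(n-r,m)$ contains no $1$'s) and that initial conditions agree. Note that this recurrence-plus-initial-conditions argument already yields $sp(n,m)=oc(n,m)$ on its own, so your proposal in effect contains both a bijective and an enumerative proof, mirroring the two proofs the paper gives.
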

\begin{proof} First Proof (generating functions): Let $Q_m(x)=\sum_{n\geq 0}sp(n,m)x^n$. Then 
\begin{align*}
Q_m(x)&=\sum_{n\geq 0}sp(n,m)x^{nm} + \sum_{r=1}^{m-1}\sum_{n\geq 0}sp(nm+r,m)x^{nm+r}\\
&=\sum_{n\geq 0}sp(n,m)x^{nm} + \sum_{r=1}^{m-1}\sum_{n\geq 1}sp(nm+r,m)x^{nm+r} + \sum_{r=1}^{m-1}sp(r,m)x^{r}\\
&=Q_m(x^m) + \sum_{r=1}^{m-1}\sum_{n\geq 1}(2sp(nm,m)+sp(nm+r-m,m))x^{nm+r} + \sum_{r=1}^{m-1}x^{r}\\
&=Q_m(x^m) + 2\sum_{r=1}^{m-1}x^r\sum_{n\geq 1}sp(nm,m)x^{nm} + \sum_{r=1}^{m-1}\sum_{n\geq 0}sp(nm+r,m)x^{nm+r+m} + \sum_{r=1}^{m-1}x^{r}\\
&=Q_m(x^m) + 2(Q_m(x^m)-1)\sum_{r=1}^{m-1}x^{r} + \sum_{r=1}^{m-1}x^{r} + \sum_{r=1}^{m-1}\sum_{n\geq 0}sp(nm+r,m)x^{nm+r+m}.
\end{align*}

Eliminating the last sum by means of the first equality, we obtain 
$$Q_m(x) = Q_m(x^m) + 2(Q_m(x^m)-1)\sum_{r=1}^{m-1}x^{r} + \sum_{r=1}^{m-1}x^{r} +  x^m(Q_m(x)-Q_m(x^m)),$$
$$(1-x^m)Q_m(x) = Q_m(x^m) + 2Q_m(x^m)\sum_{r=1}^{m-1}x^{r} - \sum_{r=1}^{m-1}x^{r} - x^mQ_m(x^m)$$
which gives
$$Q_m(x) + \frac{\sum_{r=1}^{m-1}x^r}{1-x^m} = \frac{1+2\sum_{r=1}^{m-1}x^r-x^m}{1-x^m}Q_m(x^m).$$
To iterate the last equation, we begin by denoting both sides by $Q_{m,1}(x)$ and obtain
$$Q_{m,1}(x) = \frac{1+2\sum_{r=1}^{m-1}x^r-x^m}{1-x^m}\left(Q_{m,1}(x^m)-\frac{\sum_{r=1}^{m-1}x^{mr}}{1-x^{m^2}}\right)$$
which gives
$$Q_{m,1}(x) + \frac{(1+2\sum_{r=1}^{m-1}x^r-x^m)\sum_{r=1}^{m-1}x^{mr}}{(1-x^m)(1-x^{m^2})}=  \frac{1+2\sum_{r=1}^{m-1}x^r-x^m}{1-x^m}Q_{m,1}(x^m).$$
Denoting both sides of the last equation by $Q_{m,2}(x)$ we obtain
$$Q_{m,2}(x) = \frac{1+2\sum_{r=1}^{m-1}x^r-x^m}{1-x^m}\left(Q_{m,2}(x^m)-\frac{(1+2\sum_{r=1}^{m-1}x^{mr}-x^{m^2})\sum_{r=1}^{m-1}x^{m^2r}}{(1-x^{m^2})(1-x^{m^3})}\right)$$
which gives 
$$Q_{m,2}(x) +  \frac{(1+2\sum_{r=1}^{m-1}x^r-x^m)(1+2\sum_{r=1}^{m-1}x^{mr}-x^{m^2})\sum_{r=1}^{m-1}x^{m^2r}}{(1-x^m)(1-x^{m^2})(1-x^{m^3})}$$
$$=\frac{1+2\sum_{r=1}^{m-1}x^r-x^m}{1-x^m}Q_{m,2}(x^m).$$
In general we obtain, for any $i>0$,
$$Q_{m,i}(x) + \frac{x^{m^{i}r}}{1-x^{m^{i+1}}}\prod\limits_{t=0}^{i-1} \frac{1+2\sum_{r=1}^{m-1}x^{m^t r}-x^{m^{t+1}}}{1-x^{m^{t+1}}}
=\frac{1+2\sum_{r=1}^{m-1}x^r-x^m}{1-x^m}Q_{m,i}(x^m).$$
We suggest $Q_{m}(x)$ as the limiting value of the $Q_{m,i}(x)$ as $i$ tends to infinity, plus a 1 for our initial condition:
$Q_{m}(x)=1 + \lim\limits_{i\rightarrow\infty} Q_{m,i}(x)$.

$$Q_{m}(x) = 1 + \sum_{i=0}^\infty\frac{\sum_{r=1}^{m-1}x^{m^i r}}{1-x^{m^{i+1}}}\prod\limits_{t=0}^{i-1} \frac{1+2\sum_{r=1}^{m-1}x^{m^t r}-x^{m^{t+1}}}{1-x^{m^{t+1}}}.$$
We see that 
$$Q_{m}(x) = 1 + \sum_{i=0}^\infty\frac{\sum_{r=1}^{m-1}x^{m^i r}}{1-x^{m^{i+1}}}\prod\limits_{t=0}^{i-1}\left(1+ \frac{2\sum_{r=1}^{m-1}x^{m^t r}}{1-x^{m^{t+1}}}\right)=\sum_{n=0}^\infty oc(n,m)x^n.$$

As before, it is clear that this is the generating function for $oc(n,m)$, and it can be seen algebraically to satisfy the functional equation $$Q_m(x) = Q_m(x^m) + 2(Q_m(x^m)-1)\sum_{r=1}^{m-1}x^{r} + \sum_{r=1}^{m-1}x^{r} +  x^m(Q_m(x)-Q_m(x^m))$$ constructed earlier for $sp(n,m)$.

This completes the proof.
Some coefficients in the expansion of $Q_m(x)$ are displayed in Table \ref{tab1}.
\vskip 10pt

\ni Second Proof: We give a combinatorial proof. Let the sets enumerated by $sp(n,m)$ and $oc(n,m)$ be denoted by $SP(n,m)$ and $OC(n,m)$ respectively.

Each part $t$ of $C\in SP(n,m)$ can be expressed as $t=m^i\cdot h,\, i\geq 0$, where $m$ does not divide $h$. Now transform $t$ as follows:

$$t=m^i\cdot h\longmapsto m^i,m^i,\ldots,m^i\ (h\ \mbox{times}).$$

Note that the case $i=0$ may arise only as a first or last part of $C$ (by Theorem \ref{maxmy}). This gives a unique member of $OC(n,m)$ provided that we retain the clusters of the $m^i$, corresponding to each $t$, in consecutive positions, and maintain the order of the parts of the resulting $m$-power composition (as in the case of $m=2$).

To reverse the map we write each $\beta\in OC(n,m)$ in the on-place exponent notation, to get $\beta=(\beta_1^{u_1},\ldots,\beta_s^{u_s})$ with the $m\nmid u_i$, and containing at most one instance of a 1-cluster which may be $\beta_1^{u_1}$ or $\beta_s^{u_s}$. Since each $\beta_i$ has the form $2^{j_i},\, j_i\geq 0$, we apply the transformation:
$$\beta_i^{u_i}=(2^{j_i})^{u_i}\longmapsto 2^{j_i}u_i.$$
This gives a unique composition in $SP(n,m)$ provided that the resulting parts retain their relative positions. Indeed the image may contain at most one part $\equiv r\pmod{m}$ which occurs precisely when $j_i=0$.

We illustrate the bijection with $(14, 3, 18, 27)\in SP(62,3)$: 
$$(14, 3, 18, 27) = (3^0\cdot 14,3^1\cdot 1,3^2\cdot 2,3^3\cdot 1)\mapsto (1^{14},3,9^2,27)\in OC(62,3).$$
\end{proof}
\medskip

\ni We provide a full example with $n=13, m=3$, where $sp(13,3)=13=c(13,3)$. The following members of the respective sets correspond 1-to-1 under the bijective proof of Theorem \ref{thm1}: 
\vskip 5pt

$SP(13,3)$: $(1, 3, 9), (3, 9, 1), (1, 9, 3), (9, 3, 1), (1, 12), (12, 1), (4, 9), (9, 4), (7, 6), (6, 7),$ 

$\quad {} (10, 3), (3, 10), (13)$.
\vskip 4pt

$C(13,3)$: $(1, 3, 9), (3, 9, 1), (1, 9, 3), (9, 3, 1), (1, 3^4), (3^4, 1), (1^4, 9), (9, 1^4), (1^7, 3^2), (3^2, 1^7),$

$\quad {} (1^10, 3), (3, 1^{10}), (1^{13})$.\\

\begin{table}[h!]
\centering
\begin{tabular}{c|ccccccccccccccc}\hline
$n$ &1&2&3&4&5&6&7&8&9&10&11&12&13&14&15\\ \hline 
$m=2$ &1&1&3&1&5&3&11&1&13&5&23&3&29&11&51\\ \hline 
$m=3$ &1&1&1&3&3&1&5&5&1&7&7&3&13&13&3\\ \hline
$m=4$ &1&1&1&1&3&3&3&1&5&5&5&1&7&7&7\\ \hline
$m=5$ &1&1&1&1&1&3&3&3&3&1&5&5&5&5&1\\ \hline
$m=6$ &1&1&1&1&1&1&3&3&3&3&3&1&5&5&5\\ \hline
\end{tabular}
\caption{Values of $sp(n,m)$, for $2\leq m\leq 6,\, 1\leq n\leq 15$}\label{tab1}
\end{table}

\section{Structural Properties of Semi-$m$-Pell Compositions}\label{struct}
Following \cite{AMN} we define the \emph{max $m$-power} of an integer $N$ as the largest power of $m$ that divides $N$ (not just the exponent of the power). Thus using the notation $x_m(N)$, we find that $N=u\cdot m^s,\, s\geq 0$, where $m\nmid u$ and $x_m(N)=m^s$. So $x_m(N)>0$ for all $N$. 

For example, $x_2(50)=2$ and $x_5(216)=1$. 

We define three (reversible) operations on a composition  $C=(c_1,\ldots,c_k)$ with any $m>1$:

(i) If the first or last part of $C$ is less than $m$, delete it:
 
$\quad c_1<m\implies \tau_1(C)=(c_2,\ldots,c_{k})$ or $c_k<m\implies \tau_1(C)=(c_1,\ldots,c_{k-1})$;

(ii) If $m\nmid c_t>m$, then $\tau_2(c)=(c_1,\ldots,c_{t-1},c_{t}-m,c_{t+1},\ldots,c_k)$.

(iii) If $C$ consists of multiples of $m$, divide every part by $m$: $\tau_3(C)=(c_1/m,\ldots,c_k/m)$.\\
These operations are consistent with the recursive construction of the set $SF(n,m)$, where $\tau_3^{-1}, \tau_1^{-1}$ and $\tau_2^{-1}$ correspond, respectively, to the three quantities in the recurrence \eqref{eq1}.

\begin{lemma}\label{lemch1} Let $n>0,\, m>1$ be integers with $n\equiv r\pmod{m},\, 1\leq r<m$. 

If $C=(c_1,\ldots,c_k)\in SP(n,m)$, then $c_1\equiv r\pmod{m}$ or $c_k\equiv r\pmod{m}$. 
\end{lemma}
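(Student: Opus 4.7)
The plan is to prove this by strong induction on $n$, exploiting the recursive definition of $SP(n,m)$ together with Lemma \ref{lem001}, which guarantees that $C$ contains \emph{exactly one} part congruent to $r\pmod m$. The whole content of the lemma is then that this unique special part must sit at one of the two ends of the composition.

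For the base case, when $n=r$ with $1\le r<m$, we have $SP(r,m)=\{(r)\}$, and $c_1=c_k=r\equiv r\pmod m$, so the claim holds trivially. For the inductive step, fix $n>m$ with $n\equiv r\pmod m$ and assume the claim for all smaller positive integers. By the recursive construction, $C$ arises in one of two ways.

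In the first case, $C$ is obtained by inserting $r$ at the beginning or the end of some $\alpha\in SP(n-r,m)$. Since $n-r\equiv 0\pmod m$, Lemma \ref{lem001} tells us every part of $\alpha$ is a multiple of $m$, so the inserted $r$ is the unique part of $C$ congruent to $r\pmod m$, and by construction it is $c_1$ or $c_k$. In the second case, $C$ is obtained from some $\beta\in SP(n-m,m)$ by adding $m$ to the unique part of $\beta$ congruent to $r\pmod m$; note $n-m\equiv r\pmod m$, so the inductive hypothesis applies to $\beta$ and places this distinguished part at position $1$ or $k$. Adding $m$ does not move the part and does not alter its residue mod $m$, so the resulting $C$ still has a part $\equiv r\pmod m$ as its first or last entry.

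The only subtlety, and the single point worth checking carefully, is that the two constructive cases exhaust all of $SP(n,m)$ and that the distinguished residue class is preserved correctly under each operation; this is exactly the content built into the recurrence \eqref{eq1} together with Lemma \ref{lem001}. I do not expect any genuine obstacle: the argument is a direct unwinding of the recursive definition, and there is nothing combinatorially delicate beyond observing that neither ``insert at an end'' nor ``add $m$ to the unique residue-$r$ part'' can push the special part away from the boundary.
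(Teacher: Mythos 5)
Your proof is correct, but it takes a different route from the paper's. The paper argues by contradiction using the reduction operations of Section \ref{struct}: if some interior part $c_i$ (with $i\notin\{1,k\}$) were $\equiv r\pmod m$, repeated application of $\tau_2$ (subtracting $m$ from that part) would produce a semi-$m$-Pell composition containing the literal part $r$ in an interior position, which is impossible because the recursive construction only ever creates a part smaller than $m$ by inserting it at the first or last position. Your argument instead runs a direct strong induction on $n$ over the two generating rules, using Lemma \ref{lem001} to see that in the ``insert $r$'' case the distinguished part is created at an end, and that in the ``add $m$'' case the inductive hypothesis already places the unique residue-$r$ part of $\beta$ at an end, where it stays since adding $m$ neither moves it nor changes its residue. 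Your version is more elementary and self-contained (it needs only the recursive definition and Lemma \ref{lem001}, not the $\tau_i$ machinery), and it makes the exhaustiveness of the two cases explicit; the paper's version is shorter given that the $\tau$-operations are already set up, and it rehearses the reduction technique that is reused in the proof of Theorem \ref{maxmy}. The only point you should state a touch more carefully is the bookkeeping that $n>m$ and $n\equiv r\pmod m$ force $n-m\ge r>0$, so the inductive hypothesis genuinely applies to $\beta\in SP(n-m,m)$; with that noted, the induction is airtight.
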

\begin{proof}
If $k\leq 2$, the assertion is clear. So assume that $k>2$ such that $c_i\equiv r\pmod{m}$ for a certain index $i\notin\{1,k\}$. Then we can apply $\tau_2$ several times to obtain the composition $\beta=\tau_2^s(C),\, s=\lfloor\frac{c_i}{m}\rfloor$, that contains $r$ which is neither in the first nor last position. But this contradicts the recursive construction of $\beta$. Hence the assertion holds for all $C\in SP(n,m)$.  
\end{proof}

\begin{remark}\label{rem1}
If Lemma \ref{lemch1} is violated when $k>2$, then the sequence of max $m$-powers of the parts of $C$ cannot be unimodal: if $m\nmid c_j$ with $c_j\notin \{c_1,c_k\}$, then $x_m(c_j)=1$.
\end{remark}

\begin{lemma}\label{lemch2} Let $H(n,m)$ denote the set of compositions $C$ of $n$ such that the sequence of max $m$-powers of the parts of $C$ are distinct and unimodal.
Then if $C\in H(n,m)$ and $\tau_i(C)\neq\emptyset$, then $\tau_i(C)\in H(N,m),\, i=1,2,3$, for some $N$.
\end{lemma}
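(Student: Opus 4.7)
The plan is to handle each of the three operations $\tau_i$ separately, in each case showing that the sequence $(x_m(c_1),\ldots,x_m(c_k))$ continues to be (a) distinct and (b) unimodal after application. The key observation that drives everything is that $\tau_1$ only shortens the sequence from an end, $\tau_2$ leaves the $x_m$-sequence pointwise unchanged, and $\tau_3$ divides every entry of the $x_m$-sequence by $m$; each of these manipulations trivially preserves both distinctness and unimodality, so once this observation is pinned down the lemma falls out.

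For $\tau_1$, I would note that deleting a first or last entry of a sequence preserves distinctness and also preserves unimodality (truncating either tail of a unimodal sequence yields a unimodal sequence). For $\tau_3$, since every part is a multiple of $m$, writing $c_t=u_t m^{s_t}$ with $m\nmid u_t$ and $s_t\geq 1$ gives $c_t/m=u_t m^{s_t-1}$, so $x_m(c_t/m)=x_m(c_t)/m$. The map $y\mapsto y/m$ on the $x_m$-sequence is a strictly increasing bijection on its image, so distinctness and unimodality transfer directly from $C$ to $\tau_3(C)$.

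The slightly more delicate case is $\tau_2$. Here the hypothesis $m\nmid c_t$ and $c_t>m$ means $x_m(c_t)=1$. I would then check that $x_m(c_t-m)=1$ as well: since $c_t-m\equiv c_t\not\equiv 0\pmod{m}$, the part $c_t-m$ is still not divisible by $m$, so its max $m$-power is again $m^0=1$. Therefore the entire sequence $(x_m(c_1),\ldots,x_m(c_k))$ is literally unchanged by $\tau_2$, and distinctness plus unimodality are inherited for free. The only thing one might worry about is whether $c_t-m$ could drop to zero or a negative value, but the standing hypothesis $c_t>m$ rules this out, so $\tau_2(C)$ remains a composition of $N=n-m$ of the same length $k$.

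The main (and essentially only) obstacle, such as it is, is the $\tau_2$ case: one must spot that the operation subtracts $m$ from a part whose max $m$-power is $1$ and that this residue class is preserved, so the entry $x_m(c_t)=1$ does not change. After that, combining the three cases yields $\tau_i(C)\in H(N,m)$ for a suitable $N$ (namely $n-r$, $n-m$, and $n/m$ respectively), completing the proof.
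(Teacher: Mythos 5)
Your proof is correct, and it follows the same overall decomposition as the paper (treat $\tau_1$, $\tau_2$, $\tau_3$ separately and check that distinctness and unimodality of the max $m$-power sequence survive), but your justifications for $\tau_2$ and $\tau_3$ are genuinely different and arguably cleaner. For $\tau_2$ the paper first invokes Lemma~\ref{lemch1} to place the non-multiple of $m$ at position $1$ or $k$ and then argues that subtracting $m$ there preserves unimodality; note that Lemma~\ref{lemch1} is stated for $SP(n,m)$, not $H(n,m)$, so the paper is implicitly using the (easy) fact that in $H(n,m)$ the unique part with $x_m=1$ must sit at an end. You bypass this entirely by observing that $c_t-m\equiv c_t\not\equiv 0\pmod m$, so $x_m(c_t-m)=1=x_m(c_t)$ and the max-power sequence is pointwise unchanged --- no positional information is needed, and the argument works for $\tau_2$ applied at any admissible index, which matches the operation's definition. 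Similarly, for $\tau_3$ the paper only remarks that the image has at most one non-multiple of $m$, whereas you give the sharper and more directly relevant statement $x_m(c_t/m)=x_m(c_t)/m$, from which distinctness and unimodality transfer verbatim. So your route buys a self-contained proof of the lemma with no appeal to Lemma~\ref{lemch1}, at essentially no extra cost.
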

\begin{proof} Let $C=(c_1,\ldots,c_k)\in H(n,m)$. If $C$ contains a part $r$ less than $m$, then $r=c_1$ or $r=c_k$ (by Lemma \ref{lemch1}). So $\tau_1(C)\in H(n-r,m)$ since the max $m$-powers remain distinct and unimodal. If $C$ contains a non-multiple of $m$, say $c_t>m$, then by Lemma \ref{lemch1}, $t\in\{1,k\}$. Therefore  $\tau_2(C)$, i.e., replacing $c_t$ with $c_t-m$, preserves the unimodality of $C$. So $\tau_2(C)\in H(n-m,m)$.   
Lastly, since the parts of $C$ have distinct max $m$-powers $\tau_3(C)=(c_1/m,\ldots,c_k/m)$ contains at most one non-multiple of $m$. Hence $\tau_3(C)\in H(n/m,m)$.
\end{proof}

We state an independent characterization of the Semi-$m$-Pell compositions.

\begin{theorem}\label{maxmy}
A composition $C$ of $n$ is a semi-$m$-Pell composition if and only if the sequence of max $m$-powers of the parts of $C$ are distinct and unimodal.
\end{theorem}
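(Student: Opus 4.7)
The plan is to prove both implications by induction on $n$, using the three reduction operations $\tau_1,\tau_2,\tau_3$ as the bridge: each $\tau_i$ corresponds bijectively to one of the three branches of the recursive definition of $SP(n,m)$, and by Lemma \ref{lemch2} each $\tau_i$ also preserves membership in $H$. So the strategy is to match each recursion case with the appropriate structural move on $H$.

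For the forward direction, suppose $C\in SP(n,m)$. The base cases $1\leq n\leq m$ give the singleton composition $(n)$, which trivially has a distinct unimodal sequence of max $m$-powers. For $n>m$, consider how $C$ was built. If $m\mid n$, then $C$ comes from $C'\in SP(n/m,m)$ by multiplying each part by $m$; every $x_m$ value is scaled by $m$, so distinctness and unimodality are preserved. If $n\equiv r\pmod m$ with $0<r<m$ and $C$ is obtained by prepending or appending $r$ to $C'\in SP(n-r,m)$, then Lemma \ref{lem001} forces every part of $C'$ to be a multiple of $m$, so each of their max $m$-powers is at least $m$; the new part $r$ contributes max $m$-power $1$ at an endpoint, which preserves strict unimodality and distinctness. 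Finally, if $C$ arises by adding $m$ to the unique part of some $C'\in SP(n-m,m)$ congruent to $r\pmod m$, that part's residue (hence its max $m$-power, which equals $1$) is unchanged, and no other max $m$-power is touched, so the sequence of max $m$-powers is literally the same as for $C'$.

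For the converse, assume $C\in H(n,m)$ and induct on $n$. Again the cases $n\leq m$ are immediate. For $n>m$ there are two sub-cases. If every part of $C$ is divisible by $m$, apply $\tau_3$: by Lemma \ref{lemch2} we have $\tau_3(C)\in H(n/m,m)$, which by induction lies in $SP(n/m,m)$, so $C\in SP(n,m)$ via the $m\mid n$ branch of the recursion. Otherwise some part of $C$ is not divisible by $m$; by the distinctness of max $m$-powers, exactly one such part $c_t$ exists, namely the unique one with $x_m(c_t)=1$. Since $1$ is then the strict global minimum of a strictly unimodal distinct-valued sequence, $c_t$ must sit at position $1$ or $k$. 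Write $c_t\equiv r\pmod m$. If $c_t<m$ then $c_t=r$; apply $\tau_1$, observe $\tau_1(C)\in H(n-r,m)$, use induction to put it in $SP(n-r,m)$, and then $C$ is exactly what the recursion produces by inserting $r$ at that endpoint. If $c_t>m$, apply $\tau_2$; by induction $\tau_2(C)\in SP(n-m,m)$ and still contains a unique part congruent to $r\pmod m$ (namely $c_t-m$), so adding $m$ back to that part, which is precisely the second recursion case, yields $C\in SP(n,m)$.

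The main obstacle is the endpoint argument in the reverse direction: we must know that the unique non-multiple-of-$m$ part sits at position $1$ or $k$, for only then does $\tau_1$ or $\tau_2$ act admissibly and correspond to an inverse recursive step. This is exactly where the combination of distinctness and unimodality is indispensable, since a strictly unimodal sequence of distinct values attains its minimum only at one of its two ends; without both hypotheses the reduction could get stuck on an interior non-multiple (the scenario already flagged by Remark \ref{rem1}). Beyond this, the verification is bookkeeping: checking that each $\tau_i$ is compatible with exactly one recursion branch and that the inductive hypothesis applies to an object of strictly smaller size.
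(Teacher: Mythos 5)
Your proof is correct, and it differs from the paper's in a meaningful way on one of the two implications. For the direction $SP(n,m)\subseteq H(n,m)$ you argue by direct structural induction on the recursive construction: each of the three building steps (scaling by $m$, inserting $r$ at an end of an all-multiples-of-$m$ composition, adding $m$ to the unique part $\equiv r\pmod m$) visibly preserves distinctness and unimodality of the max $m$-power sequence, since scaling multiplies every $x_m$-value by $m$, insertion adjoins the strict minimum value $1$ at an endpoint, and the add-$m$ step leaves the $x_m$-sequence literally unchanged. The paper instead proves this direction by contradiction, assuming a violation of distinctness or unimodality and driving it, via repeated applications of $\tau_1,\tau_2,\tau_3$, to a composition with two parts not divisible by $m$, contradicting Lemma \ref{lem001} (together with Lemma \ref{lemch1} and Remark \ref{rem1}). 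Your forward argument is cleaner and avoids that reduction machinery entirely. For the converse the two proofs essentially coincide (case split on whether all parts are multiples of $m$, then undo $\tau_3$, $\tau_1$ or $\tau_2$ and apply induction), but you obtain the key endpoint fact --- that the unique part with $x_m=1$ sits at position $1$ or $k$ --- directly from the observation that a distinct-valued unimodal sequence attains its minimum at an end, whereas the paper routes this through Lemma \ref{lemch1}, which is stated for $SP(n,m)$ rather than $H(n,m)$; your self-contained argument is actually the tidier justification at that point. The only cosmetic gap is that in the converse, case of a non-multiple part, you should note explicitly that the remaining parts are all multiples of $m$ (forced by distinctness), so that $n\equiv r\pmod m$ with $r\neq 0$ and the two non-multiple branches of the recursion are indeed the ones that apply; this is immediate and does not affect correctness.
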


\begin{proof} We show that $SP(n,m)= H(n,m)$. Let $C=(c_1,\ldots,c_k)\in SP(n,m)$ such that $C\notin H(n,m)$. Denote the properties, 

P1: sequence of max $m$-powers of the parts of $C$ are distinct.

P2: sequence of max $m$-powers of the parts of $C$ is unimodal.\\
First assume that $C$ satisfies P2 but not P1. So there are $c_i > c_j$ such that $x_m(c_i)=x_m(c_j)$, and let $c_i=u_im^s,\, c_j=u_jm^s$ with $m\nmid u_i,u_j$. Observe that $\tau_1$ deletes a part less than $m$, if it exists, from a member of $H(v,m)$. So we can use repeated applications of $\tau_2$ to reduce a non-multiple modulo $m$, followed by $\tau_1$. This is tantamount to simply deleting the non-multiple of $m$, say $c_t$, to obtain a member of $H(N,m),\, N<v$, from Lemma \ref{lemch2}.
By thus successively deleting non-multiples from $C$, and applying $\tau_3^c,\, c>0$, we  
obtain a composition $E = (e_1,e_2,\ldots)$ with $e_i=v_im^w>e_j=v_jm^w$, where $m\nmid v_i,v_j$ and $w\leq s$. Then apply $\tau_3^w$ to obtain a composition $G$ with two non-multiples of $m$. Then by Lemma \ref{lem001}, $G\notin SP(n,m)$.
Secondly assume that $C$ satisfies P1 but not P2. Then by the proof of Lemma \ref{lemch1} and Remark \ref{rem1} $\tau_2^u(C)\notin SP(N,m)$ for some $u$.
Therefore $C\in SP(n,m)\implies C \in H(n,m)$.
   
Conversely let $C=(c_1,\ldots,c_k)\in H(n,m)$. If $C=(t),\, 1\leq t\leq m$, then $C\in SP(t,m)$. If $m|c_i$ for all $i$, then $\tau_3(C)= (c_1/m,\ldots,c_k/m)\in H(n/m,m)$ contains at most one part $\not\equiv 0\pmod{m}$, so $C\in SP(n,m)$. Lastly assume that $n\equiv r\not\equiv 0\pmod{m}$. Then $r\in C$ or $m<c_t\equiv r\pmod{m}$ for exactly one index $t\in\{1,k\}$. Thus $\tau_1(C)$ consists of multiples of $m$ while $\tau_2(C)$ still contains one part $\not\equiv 0\pmod{m}$. In either case $C\in SP(n,m)$. Hence $H(n,m)\subseteq SP(n,m)$. The the two sets are identical.
\end{proof}

As an illustration of Theorem \ref{maxmy} note, for example, that $(2,9,4),(1,4,2,8)\notin SP(15,2)$ because the sequence of max $m$-powers of the parts are not unimodal, and $(2,10,3),(3,4,6,2)\notin SP(15,2)$ because the max $m$-powers are not distinct.

\begin{theorem}\label{conjsolv1} Let $n,m$ be integers with $n\geq 0,\, m>1$. Then  
$$sp(nm+1,m)=sp(nm+2,m)=\cdots =sp(nm+m-1,m) = 1+2\sum\limits_{j = 1}^{n}sp(j,m).$$
\end{theorem}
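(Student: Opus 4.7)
The plan is to proceed by induction on $n$, using the recurrence \eqref{eq1} in a way that uncouples the dependence on the residue class $r$. The key observation is that for $1 \leq r \leq m-1$, applying the second case of \eqref{eq1} to $nm+r$ and then the first case to $nm$ gives
$$sp(nm+r,m) = 2\,sp(nm,m) + sp((n-1)m+r,m) = 2\,sp(n,m) + sp((n-1)m+r,m).$$
The crucial point is that the increment $2\,sp(n,m)$ is independent of $r$. Hence if all the values $sp((n-1)m+r,m)$ for $1\leq r \leq m-1$ are equal to a common quantity, the same will be true of $sp(nm+r,m)$, and the common value will grow by exactly $2\,sp(n,m)$.

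First I would handle the base case $n=0$: by the definition of $SP(r,m)$ for $1\le r\le m-1$, we have $sp(r,m)=1$, which matches $1 + 2\sum_{j=1}^{0}sp(j,m) = 1$. Then I would assume inductively that
$$sp((n-1)m+1,m) = \cdots = sp((n-1)m+m-1,m) = 1 + 2\sum_{j=1}^{n-1}sp(j,m),$$
and feed this into the displayed identity above. For each $r\in\{1,\ldots,m-1\}$ this yields
$$sp(nm+r,m) = 2\,sp(n,m) + 1 + 2\sum_{j=1}^{n-1}sp(j,m) = 1 + 2\sum_{j=1}^{n}sp(j,m),$$
which is exactly the desired identity and, since the right-hand side does not involve $r$, simultaneously establishes the equalities $sp(nm+1,m)=\cdots=sp(nm+m-1,m)$.

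There is no real obstacle here; the argument is essentially a direct induction once one notices that the second case of \eqref{eq1} expressed at $nm+r$ collapses via the first case to a term $2\,sp(n,m)$ that is independent of $r$. The only bookkeeping point worth stating carefully is that $(n-1)m+r \geq m$ is needed to apply the recurrence, so for $n=1$ one should check the step by hand using $SP(r,m)=\{(r)\}$, which reproduces the correct value $1 + 2\,sp(1,m) = 3$ and matches the rows of Table~\ref{tab1}.
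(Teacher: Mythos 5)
Your proof is correct and rests on the same engine as the paper's: the identity $sp(nm+r,m)=2\,sp(nm,m)+sp((n-1)m+r,m)=2\,sp(n,m)+sp((n-1)m+r,m)$ coming from the two cases of \eqref{eq1}. The paper merely organizes it in two stages (first the equality of the values over $r$ by induction, then the closed form by iterating the recurrence down to $sp(m+r,m)$), whereas you fold both into a single induction on $n$ — a harmless streamlining, and your base-case bookkeeping at $n=0,1$ is fine.
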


\begin{proof}
We first establish all but the last equality. By definition, $sp(1,m)=sp(2,m)=\cdots sp(m-1,m)=1$, and since $sp(m,m)=1$, we have  $sp(m+1,m)=2sp(m,m)+sp(1,m)=3$. Similarly $sp(m+2,m)=3=sp(m+3,m)=\cdots =sp(2m-1,m)$.
Assume that the result holds for all integers $<nm$. 
Then with $1\leq r\leq m-1$ we have $sp(nm+r,m)=2sp(nm,m)+sp(nm-(m-r),m)$.
But $1\leq r\leq m-1\implies m-1\geq m-r\geq 1$ and $sp(nm-(m-r),m)$ is constant by the inductive hypothesis. Hence the result.

For the last equality, we iterate the recurrence \eqref{eq1}. For each $r\in [1,m-1]$,
\begin{align*}
sp(mv+r,m)&=2sp(mv,m)+sp(m(v-1)+r,m)\\
&=2sp(v,m)+2sp(v-1,m)+sp(m(v-2)+r,m)\\
&=\cdots\\
&=2sp(v,m)+2sp(v-1,m)+\cdots + 2sp(2,m)+sp(m+r,m)
\end{align*}
Since $sp(m+r,m)=2sp(m,m)+sp(r,m)=2sp(1,m)+sp(r,m)$, we obtain the desired result:

$sp(mv+r,m)=2sp(v,m)+2sp(v-1,m)+\cdots + 2sp(2,m)+2sp(1,m)+1$.
\end{proof}
\medskip

\begin{coro}\label{corospec} Given integers $m\geq 2$, then for any $j\geq 0$ and a fixed $v\in\{0,1,\ldots,m\}$,
$$sp(m^j(mv+r),m)=2v+1,\ 1\leq r\leq m-1.$$
\end{coro}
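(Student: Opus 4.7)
The plan is to reduce the statement to Theorem~\ref{conjsolv1} by peeling off the factor $m^j$ using the divisibility case of the recurrence, and then show that the sum appearing in Theorem~\ref{conjsolv1} collapses because $sp(k,m)=1$ for every $k$ in the relevant range.

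First I would observe that since $1\leq r\leq m-1$, we have $\gcd(mv+r,m)=\gcd(r,m)$, so $m\nmid (mv+r)$. Consequently $m^j(mv+r)$ is divisible by exactly $m^j$, and we may apply the first branch of the recurrence \eqref{eq1} a total of $j$ times to obtain
\[
sp(m^j(mv+r),m)=sp(m^{j-1}(mv+r),m)=\cdots=sp(mv+r,m).
\]
Thus the dependence on $j$ disappears, and the problem reduces to evaluating $sp(mv+r,m)$ for $0\leq v\leq m$ and $1\leq r\leq m-1$.

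Next I would invoke Theorem~\ref{conjsolv1}, which gives
\[
sp(mv+r,m)=1+2\sum_{k=1}^{v}sp(k,m).
\]
To finish, it suffices to verify that $sp(k,m)=1$ for every $1\leq k\leq m$. For $1\leq k\leq m-1$ this holds by the initial conditions of the recurrence \eqref{eq1}, while for $k=m$ we apply the first branch of the recurrence: $sp(m,m)=sp(1,m)=1$. Since $v\leq m$ by hypothesis, every term in the sum is $1$, yielding $\sum_{k=1}^{v}sp(k,m)=v$ and therefore $sp(mv+r,m)=2v+1$.

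There is essentially no serious obstacle; the only point requiring any care is checking that the hypothesis $v\leq m$ is exactly what is needed so that the summation indices remain in the range where $sp(k,m)=1$. Had $v$ been allowed to equal $m+1$, the sum would pick up $sp(m+1,m)=3$ and the clean identity $2v+1$ would fail, which explains the stated range for $v$.
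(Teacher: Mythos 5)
Your proposal is correct and follows essentially the same route as the paper: strip off the factor $m^j$ via the division branch of the recurrence \eqref{eq1}, apply the last equality of Theorem~\ref{conjsolv1}, and use $sp(k,m)=1$ for $1\leq k\leq m$ to collapse the sum to $v$. Your remark about why $v\leq m$ is the sharp bound (since $sp(m+1,m)=3$) is a nice addition but not needed; the argument is complete.
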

\begin{proof} By applying the first part of the recurrence \eqref{eq1} $j\geq 0$ times we obtain $sp(m^j(mv+r),m) = sp(mv + r,m)$. The last equality in Theorem \ref{conjsolv1} then gives
$$sp(mv + r, m)  = 1 + 2\sum\limits_{i = 1}^{v}sp(i,m),\, 0\leq v\leq m,\, 1\leq r <m.$$
We know from \eqref{eq1} that $sp(i,m)=1$ when $1\leq i\leq m-1$, and since $sp(m,m)=1$, we obtain $sp(mv + r, m) = 1 + 2\sum\limits_{i = 1}^{v}1 = 1 + 2v$. Lastly, when $v=0$, we have $sp(mv+r,m)=sp(r,m)=1+2(0)=1$, as expected.
\end{proof}

Corollary \ref{corospec} is a stronger version of Theorem \ref{conjsolv1} since the restriction of $v$ to the set $\{0,1,\ldots,m\}$ specifies a common value.

We note the interesting cases $v=0$ of Corollary \ref{corospec} below.
\begin{coro}\label{coro2} Given an integer $m\geq 2$, then
$$sp(m^ih,m)=1,\ 1\leq h\leq m-1,\, i\geq 0.$$
Hence
$$sp(m^i,m)=1,\ i\geq 0.$$
\end{coro}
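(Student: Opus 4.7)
The plan is to obtain both assertions as immediate specializations of Corollary \ref{corospec}, which has already been proved in full generality. I would not introduce any new combinatorial machinery; everything needed is contained in that corollary, which asserts that $sp(m^j(mv+r),m)=2v+1$ whenever $0\leq v\leq m$ and $1\leq r\leq m-1$.

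For the first displayed equality, I would specialize to $v=0$. Then $mv+r=r$ ranges over $\{1,2,\ldots,m-1\}$, and the formula from Corollary \ref{corospec} collapses to $sp(m^j\cdot r,m)=2(0)+1=1$. Relabeling $j\mapsto i$ and $r\mapsto h$ yields $sp(m^i h,m)=1$ for $1\leq h\leq m-1$ and all $i\geq 0$, which is the first claim.

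For the second displayed equality, I would simply set $h=1$ in the first equality (this is legal since $m\geq 2$ guarantees $1\leq 1\leq m-1$), obtaining $sp(m^i,m)=sp(m^i\cdot 1,m)=1$ for every $i\geq 0$. This already suffices, but if one prefers a self-contained derivation of the $sp(m^i,m)=1$ statement, it also follows directly from the first branch of the recurrence \eqref{eq1}, which gives $sp(m^i,m)=sp(m^{i-1},m)=\cdots=sp(1,m)=1$ by induction on $i$.

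There is essentially no obstacle here; the only thing to check is that the indexing $v\in\{0,1,\ldots,m\}$ in Corollary \ref{corospec} does include $v=0$, which it does, and that when $v=0$ the expression $m^j(mv+r)$ reduces cleanly to $m^j r$. Both are immediate, so the corollary is a direct specialization and requires no further argument.
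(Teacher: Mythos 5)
Your proposal is correct and matches the paper exactly: the paper presents this corollary as precisely the case $v=0$ of Corollary \ref{corospec}, which is the specialization you carry out. The extra remark about deriving $sp(m^i,m)=1$ directly from the first branch of the recurrence \eqref{eq1} is fine but not needed.
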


\section{Arithmetic Properties}\label{pties} 
The following Lemma may be easily proved like Lemma \ref{lemm2}:
\begin{lemma}\label{lemm2a} $sp(n,m)$ is odd for all integers $n\geq 0,\, m>1$.
\end{lemma}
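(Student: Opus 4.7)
The plan is to prove Lemma \ref{lemm2a} by induction on $n$, in exact parallel with the argument hinted for Lemma \ref{lemm2}. The base cases are immediate from the definition: $sp(0,m)=1$ and $sp(n,m)=1$ for $1\le n\le m-1$, and also $sp(m,m)=sp(1,m)=1$ by the first branch of \eqref{eq1}. All of these are odd.

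For the inductive step, assume $sp(k,m)$ is odd for every $0\le k<n$, and consider $n\ge m+1$. If $m\mid n$, the recurrence \eqref{eq1} gives $sp(n,m)=sp(n/m,m)$, which is odd by hypothesis since $n/m<n$. If $n\equiv r\pmod m$ with $1\le r\le m-1$, then \eqref{eq1} gives
\[
sp(n,m)=2\,sp(n-r,m)+sp(n-m,m);
\]
the first term is even, and $n-m\ge r\ge 1$ (because $n\ge m$ together with $n\equiv r\not\equiv 0\pmod m$ forces $n\ge m+r$), so $sp(n-m,m)$ is odd by the inductive hypothesis. Hence $sp(n,m)$ is odd.

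As an alternative I would sketch a combinatorial proof via Theorem \ref{thm1} and the set $OC(n,m)$: writing $n=k m^j$ with $m\nmid k$, there is exactly one composition in $OC(n,m)$ using a single part-size, namely $(m^j)^k$. Every other element of $OC(n,m)$ uses at least two distinct part-sizes, and because each part-size occupies a single cluster and the composition is weakly unimodal, the smallest part-size occupies exactly one of the two endpoint clusters. Transposing that smallest cluster to the opposite end produces a fixed-point-free involution on these remaining compositions, so their number is even and $oc(n,m)=sp(n,m)$ is odd.

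The argument is essentially bookkeeping, and the only step that needs any care is verifying that $n-m\ge 1$ in the non-multiple case of the inductive step, so that the inductive hypothesis genuinely applies; this is the only place where the initial range $1\le n\le m-1$ of the base case must be invoked rather than the recurrence.
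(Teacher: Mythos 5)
Your proof is correct and follows exactly the routes the paper intends for this lemma: the paper simply says it "may be easily proved like Lemma \ref{lemm2}," whose proof is either an easy induction on the recurrence or the pairing of $OC$ compositions with at least two part sizes by which end carries the smallest cluster, and you have carried out both of these in full. Your attention to the point that $n-m\geq 1$ in the non-multiple case is a fair bit of care, but nothing in your argument departs from the paper's approach.
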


\ni We found that  
\vskip 4pt

(1a) $sp(6j+1,3)\equiv 1\pmod{4}\ \forall\ j>0,$
\vskip 5pt

(1b) $sp(6j+4,3)\equiv 3\pmod{4}\ \forall\ j>0;$
\vskip 5pt

(2a) $sp(8j+1,4)\equiv 1\pmod{4}\ \forall\ j>0,$
\vskip 5pt

(2b) $sp(8j+5,4)\equiv 3\pmod{4}\ \forall\ j>0;$
\vskip 5pt

$\ldots{} \ldots{} \ldots{} \ldots{} $.
\vskip 5pt

\ni The congruences (1a) to (3b) are special cases of the following pairs of infinite modulo 4 congruences.
\vskip 6pt

\begin{theorem}\label{congmm1} Given any integer $m\geq 2$, then

(i) $sp(2mj+1,m)\equiv 1\pmod{4}\ \forall\ j\geq 0$\\
and

(ii) $sp(2mj+m+1,m)\equiv 3\pmod{4}\ \forall\ j\geq 0.$
\end{theorem}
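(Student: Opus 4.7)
The plan is to bootstrap directly from Theorem \ref{conjsolv1} and Lemma \ref{lemm2a}; essentially all the work has been done. First I would take $r=1$ in Theorem \ref{conjsolv1} with $n$ replaced by $2j$, which yields
\[
sp(2mj+1,m) \;=\; 1 + 2\sum_{i=1}^{2j} sp(i,m),
\]
and then with $n$ replaced by $2j+1$, which (after noting $(2j+1)m+1 = 2mj+m+1$) yields
\[
sp(2mj+m+1,m) \;=\; 1 + 2\sum_{i=1}^{2j+1} sp(i,m).
\]
These two formulas isolate the residue of each quantity modulo $4$ in terms of the parity of a finite sum of values $sp(i,m)$.

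Next I would invoke Lemma \ref{lemm2a}, which guarantees that every term $sp(i,m)$ is odd. Therefore the sum of $2j$ odd numbers is even while the sum of $2j+1$ odd numbers is odd. Writing $\sum_{i=1}^{2j} sp(i,m) = 2A$ and $\sum_{i=1}^{2j+1} sp(i,m) = 2B+1$ for suitable nonnegative integers $A,B$, I obtain
\[
sp(2mj+1,m) = 1 + 4A \equiv 1 \pmod{4},
\qquad
sp(2mj+m+1,m) = 3 + 4B \equiv 3 \pmod{4},
\]
which establishes (i) and (ii) simultaneously.

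There is essentially no obstacle; the only small thing to verify is that the boundary case $j=0$ is covered, which is immediate since Theorem \ref{conjsolv1} is stated for all $n\geq 0$ (with the empty sum equal to $0$), giving $sp(1,m)=1$ and $sp(m+1,m)=1+2\cdot sp(1,m)=3$ as expected. The conceptual content of the theorem is thus the observation that the mod-$4$ value of $sp(mn+1,m)$ is controlled entirely by the parity of $n$, which in turn reflects the ``odd count of odd summands'' mechanism already present in the scalar recurrence.
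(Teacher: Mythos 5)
Your proof is correct, and it takes a genuinely different route from the paper. The paper proves Theorem \ref{congmm1} by induction on $j$, working directly with the recurrence \eqref{eq1}: it writes $sp(2mj+1,m)=2sp(2j,m)+sp(2m(j-1)+m+1,m)$ and $sp(2mj+m+1,m)=2sp(2j+1,m)+sp(2mj+1,m)$, then uses Lemma \ref{lemm2a} for the factor-of-two terms and the inductive hypothesis for the other term, interleaving parts (i) and (ii). You instead specialize the closed form of Theorem \ref{conjsolv1} at $r=1$ with $n=2j$ and $n=2j+1$, and observe via Lemma \ref{lemm2a} that a sum of an even (resp.\ odd) number of odd terms is even (resp.\ odd), so $1+2\sum$ is $1$ or $3$ mod $4$ according to the parity of the upper limit. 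Since Theorem \ref{conjsolv1} and Lemma \ref{lemm2a} both precede this theorem and are proved independently of it, there is no circularity, and your base case $j=0$ checks out. Your argument is shorter and avoids a fresh induction (the induction is absorbed into Theorem \ref{conjsolv1}); it also makes transparent that the residue mod $4$ depends only on the parity of $n$ in $sp(nm+r,m)$ for any $1\leq r\leq m-1$, and it parallels the paper's own second proof of Theorem \ref{congmm2}, which uses the same Theorem \ref{conjsolv1} formula with a mod-3 lemma. The paper's inductive proof, in exchange, is self-contained modulo only Lemma \ref{lemm2a} and does not need the summation identity.
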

Note that Theorem \ref{congmm1} implies Theorem \ref{congm2}.

\begin{proof} By induction on $j$. Note that $sp(1,m)=1$ and $sp(m+1,m)=2sp(m,m)+sp(1)=3$. So the assertion holds for $j=0$. Assume that (i) and (ii) hold for all $r<j$.  

Then we first obtain 
\begin{align*}
sp(2mj+1,m) &= 2sp(2mj,m) + sp(2mj+1-m,m)\\
&= 2sp(2j,m) + sp(2m(j-1)+m+1,m).
\end{align*}
Then by Lemma \ref{lemm2a} $sp(2j,m)$ is odd, say $2u+1$, and the inductive hypothesis gives $sp(2m(j-1)+m+1,m)\equiv 3\pmod{4}$, say $4t+3$. Hence
$$sp(2mj+1,m) = 2(2u+1) + (4t+3) = 4(u+t)+5\equiv 1\pmod{4}$$
which proves part (i).
 
To prove part (ii) we have  
\begin{align*}
sp(2mj+m+1,m) &= 2sp(2mj+m,m) + sp(2mj+1,m)\\
&= 2sp(2j+1,m) + sp(2mj+1,m).
\end{align*}
Then by Lemma \ref{lemm2a} $sp(2j+1,m)$ is odd, say $2u+1$, and part (i) gives $sp(2mj+1,m)\equiv 1\pmod{4}$, say $4t+1$. Hence
$$sp(2mj+m+1,m) = 2(2u+1) + (4t+1) = 4(u+t)+3\equiv 3\pmod{4}.$$
\end{proof}

\ni We also found: 
\vskip 4pt

(1) $sp(16j+5,4)\equiv 0\pmod{3}\ \forall\ j\geq 0;$
\vskip 5pt

(2) $sp(49j+8,7)\equiv 0\pmod{3}\ \forall\ j\geq 0;$
\vskip 5pt

(3) $sp(100j+11,10)\equiv 0\pmod{3}\ \forall\ j\geq 0;$
\vskip 3pt

$\ldots{} \ldots{} \ldots{} \ldots{} $.
\vskip 3pt

\ni These three congruences are contained in the following infinite modulo 3 congruence.
\vskip 6pt

\begin{theorem}\label{congmm2} Given an integer $m\geq 4$ such that  $m\equiv 1\pmod{3}$, $1 \leq r < m$, then
\vskip 4pt

$sp(m^2j+m+r,m)\equiv 0\pmod{3}\ \forall\ j\geq 0$.
\end{theorem}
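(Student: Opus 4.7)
My plan is to reduce the congruence to a statement about a partial sum of the sequence $sp(\cdot,m)$ by applying Theorem~\ref{conjsolv1}, and then to exploit the hypothesis $m\equiv 1\pmod 3$ to collapse that sum modulo~$3$.

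First I would observe that $m^2 j + m + r = m(mj+1) + r$ with $1\le r<m$, so Theorem~\ref{conjsolv1} gives directly
$$sp(m^2 j + m + r, m) \;=\; 1 + 2\,S_{mj+1}, \qquad \text{where } S_N := \sum_{i=1}^{N} sp(i,m).$$
Hence the claim is equivalent to $S_{mj+1}\equiv 1\pmod 3$ for all $j\ge 0$. The base case $j=0$ is immediate: $S_1=sp(1,m)=1$.

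Next I would compute $S_{mj+1}$ by grouping the first $mj$ terms into $j$ consecutive blocks of length $m$. Inside the $k$-th block (indices $m(k{-}1){+}1,\dots,mk$), Theorem~\ref{conjsolv1} makes the first $m-1$ values constant and equal to $1 + 2S_{k-1}$, while the last value is $sp(mk,m)=sp(k,m)$ by the first branch of the recurrence~\eqref{eq1}. Summing these contributions yields
\begin{equation*}
S_{mj} \;=\; \sum_{k=1}^{j}\Bigl[(m-1)\bigl(1+2S_{k-1}\bigr)+sp(k,m)\Bigr] \;=\; j(m-1) + 2(m-1)\sum_{k=0}^{j-1}S_k + S_j.
\end{equation*}
Using the hypothesis $m\equiv 1\pmod 3$, the factor $(m-1)$ kills the first two terms modulo $3$, leaving $S_{mj}\equiv S_j\pmod 3$.

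Finally I would add the $(mj{+}1)$-th term. Another application of Theorem~\ref{conjsolv1} with $n=j$ and $r=1$ gives $sp(mj+1,m)=1+2S_j$. Therefore
$$S_{mj+1} \;=\; S_{mj}+sp(mj+1,m) \;\equiv\; S_j + 1 + 2S_j \;=\; 1 + 3S_j \;\equiv\; 1 \pmod 3,$$
which, substituted into the opening identity, yields $sp(m^2 j+m+r,m)\equiv 1+2\equiv 0\pmod 3$, as required. No step is genuinely hard; the only point demanding care is the block decomposition of $S_{mj}$ and verifying that Theorem~\ref{conjsolv1} applies uniformly to the first and last blocks (including $k=1$, where $S_{k-1}=S_0=0$ matches the definitional values $sp(r,m)=1$ for $1\le r<m$).
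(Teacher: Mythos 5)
Your proof is correct and takes essentially the same route as the paper's second proof: reduce via Theorem~\ref{conjsolv1} to the claim $\sum_{i=1}^{mj+1}sp(i,m)\equiv 1\pmod{3}$ (the paper's Lemma~\ref{lem3mod}), which is then proved by a block decomposition modulo $m$ in which the factor $m-1\equiv 0\pmod{3}$ kills everything except $S_j$. Your treatment of that sum is marginally slicker (a direct closed formula for $S_{mj}$ instead of the paper's recursion for $E_j(m)$ and a division by $2$ modulo $3$), but the key ideas coincide.
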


We give two proofs below.
\begin{proof}[First Proof of Theorem \ref{congmm2}]
We attack the problem from the $oc(n,m)$ characterization as one-place $m$-power compositions with multiplicities not divisible by $m$.  For convenience we will denote these $OC_m$ compositions.

Begin by noting one group of $OC_m$ compositions: valid compositions include $(1^{m^2j+m+r})$, $(1^r, m^{mj+1})$, and $(m^{mj+1},1^r)$, for three.

Next, consider all those compositions that include only $m$ (and no higher powers of $m$), and 1.  The number of $m$ in the composition determines the number of 1s.

Note that $mj$ is not a valid number of $m$ in the composition, since it is divisible by $m$; however, $mj-1$, $mj-2$, $\dots$, $mj-(m-1)$ are all valid numbers of $m$, and there is always a number of 1 congruent to $r$ mod $m$ with such choices.  Hence there are $2(m-1)$ such compositions, and since $m \equiv 1 \pmod{3}$, this collection numbers a multiple of 3.

Since $mj-m$ is not a valid number of $m$ in the composition, but $mj-m-1$, $\dots$, $mj-(2m-1)$ are, similar collections occur until we are down to one part of size $m$.

Thus compositions in which only parts of size 1 and $m$ occur contribute a multiple of 3 to the total number of compositions.

Now consider any valid choice of numbers and orderings of powers $m^2$, $m^3$, etc.  Suppose that these form a composition of $Cm^2$.  The remaining value to be composed is $m^2(j-C)+m+r$.  In particular, a number of 1s congruent to $r$ mod $m$ must be in the composition, and some number of $m^1$ ranging from 1 up to $m(j-C)+1$ will be in the composition.

For any valid choice of numbers and arrangement of the powers $m^i$ with $i \geq 2$, we now make a similar argument to the ``empty'' case before.  Group the six partitions in which there are no $m^1$ and the required 1s are on either side of composition, or the four possible arrangements in which there are $m(j-C)+1$ of $m^1$ and exactly $r$ of 1.  Of the other permissible numbers of $m^1$ and 1, there are four valid compositions for each, and there are a multiple of 3 such groups of compositions, as previously argued.

Thus the total number of $OC_m$ compositions of $m^2j+m+r$ is 0 mod 3, as claimed.
\end{proof}

This argument can likely be generalized to additional congruences.\\

In order to give a second proof of the theorem, we first prove a crucial lemma.
\begin{lemma}\label{lem3mod} 
If $m\equiv 1\pmod{3}$, then for any integer $j\ge 0$,  

$$\sum\limits_{i=1}^{mj+1} sp(i,m)\equiv 1\pmod{3}.$$
\end{lemma}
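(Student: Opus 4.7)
Write $T(j) = \sum_{i=1}^{mj+1} sp(i,m)$ and $S(n) = \sum_{i=1}^{n} sp(i,m)$. The plan is to prove $T(j)\equiv 1\pmod 3$ by induction on $j$, using Theorem \ref{conjsolv1} to evaluate the telescoping difference $T(j)-T(j-1)$. The base case $T(0)=sp(1,m)=1$ is immediate, so everything reduces to showing $T(j)\equiv T(j-1)\pmod 3$ for $j\ge 1$.

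For the inductive step, the indices appearing in the difference are $m(j-1)+r$ for $r=2,3,\ldots,m+1$. I would split these into three groups. For $2\le r\le m-1$, Theorem \ref{conjsolv1} gives $sp(m(j-1)+r,m)=1+2S(j-1)$, contributing $(m-2)(1+2S(j-1))$ in total. For $r=m$ the part-multiplication rule gives $sp(mj,m)=sp(j,m)$. For $r=m+1$, Theorem \ref{conjsolv1} again yields $sp(mj+1,m)=1+2S(j)$. Summing these and using $S(j)=S(j-1)+sp(j,m)$, the difference collapses to
\[
T(j)-T(j-1) \;=\; (m-1) + 2(m-1)S(j-1) + 3\,sp(j,m).
\]
Under the hypothesis $m\equiv 1\pmod 3$ each term on the right is $\equiv 0\pmod 3$, which completes the induction.

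The only real obstacle is keeping careful track of the boundary between the ``Theorem \ref{conjsolv1}'' regime and the corner cases $sp(mj,m)=sp(j,m)$, and making sure the formula for $T(j)-T(j-1)$ is still valid at $j=1$ where $S(0)=0$; a direct check (giving $T(1)-T(0)=m+2$) confirms this. Once the algebraic identity above is in place, the divisibility conclusion is immediate from $m\equiv 1\pmod 3$, and the lemma follows.
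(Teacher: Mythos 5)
Your proposal is correct. The base case, the identification of the $m$ new indices $m(j-1)+2,\dots,mj+1$, the application of Theorem \ref{conjsolv1} at levels $n=j-1$ and $n=j$, the use of $sp(mj,m)=sp(j,m)$, and the algebra giving $T(j)-T(j-1)=(m-1)+2(m-1)S(j-1)+3\,sp(j,m)$ all check out, and each term is visibly divisible by $3$ when $m\equiv 1\pmod 3$ (note $m\geq 4$ here, so the middle range of $r$ is nonempty, though the identity holds regardless).

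The route is genuinely different in organization from the paper's, though it rests on the same key input, Theorem \ref{conjsolv1}. The paper argues globally: it splits the whole sum $\sum_{i=1}^{mj+1}sp(i,m)$ into the multiples of $m$, the top term $sp(mj+1,m)$, and the double sum $E_j(m)=\sum_{t=0}^{j-1}\sum_{r=1}^{m-1}sp(mt+r,m)$; it converts the first block into $\tfrac{1}{2}\bigl(sp(mj+1,m)-1\bigr)$ via Theorem \ref{conjsolv1}, derives a separate recursion $E_j(m)=(m-1)sp(m(j-1),m)+E_{j-1}(m)$ to show $E_j(m)\equiv 0\pmod 3$, and finally reduces the expression $\tfrac{1}{2}\bigl(3sp(mj+1,m)-1\bigr)$ modulo $3$, which requires reading $\tfrac{1}{2}$ as the inverse of $2$ mod $3$. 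Your telescoping induction replaces both the auxiliary quantity $E_j(m)$ and that half-integer manipulation with a single difference computation in which the divisibility by $3$ is immediate; the paper's version, on the other hand, produces along the way the exact identity $\sum_{t=1}^{j}sp(mt,m)=\tfrac{1}{2}\bigl(sp(mj+1,m)-1\bigr)$ and the congruence $E_j(m)\equiv 0\pmod 3$, which are pieces of independent interest. Both are valid; yours is arguably the cleaner proof of the stated congruence.
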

\begin{proof}
\begin{align*}\sum_{i = 1}^{mj+1}sp(i,m)=&\sum_{r=1}^{m-1}sp(r,m)+sp(m,m)+\sum_{r=1}^{m-1}sp(m+r,m)+sp(2m,m)\\
&+\sum_{r=1}^{m-1}sp(2m+r,m)+\cdots+\sum_{r=1}^{m-1}sp(m(j-1)+r,m)\\
&+sp(mj,m)+sp(mj+1,m)
\end{align*}
$$\hspace{1.7cm} =\sum_{t=1}^{j}sp(mt,m)+sp(mj+1,m) + \sum_{t=0}^{j-1}\sum_{r=1}^{m-1}sp(mt+r,m)$$
Then using Equation \eqref{eq1} and Theorem \ref{conjsolv1} we obtain 
\begin{align}\sum_{i = 1}^{mj+1}sp(i,m)&=\frac{sp(mj+1,m)-1}{2}+sp(mj+1,m) + \sum_{t=0}^{j-1}\sum_{r=1}^{m-1}sp(mt+r,m)\nonumber\\
&=\frac{1}{2}\left(3sp(mj+1,m)-1\right) + \sum_{t=0}^{j-1}\sum_{r=1}^{m-1}sp(mt+r,m).\label{mod31}
\end{align}
But
\begin{align*}E_j(m):&=\sum_{t=0}^{j-1}\sum_{r=1}^{m-1}sp(mt+r,m)=m-1+\sum_{t=1}^{j-1}\sum_{r=1}^{m-1}sp(mt+r,m)\\
&=m-1+\sum_{t=1}^{j-1}\left(2\sum_{r=1}^{m-1}sp(mt,m) + \sum_{r=1}^{m-1}sp(mt+r-m,m) \right)\ (\mbox{by Eq. \eqref{eq1}})\\
&=m-1+2(m-1)\sum_{t=1}^{j-1}sp(t,m) + \sum_{t=0}^{j-2}\sum_{r=1}^{m-1}sp(mt+r,m)\\
&=(m-1)sp(m(j-1),m) + \sum_{t=0}^{j-2}\sum_{r=1}^{m-1}sp(mt+r,m)\ (\mbox{by Th. \ref{conjsolv1}}).\\
\end{align*}
Therefore
\begin{equation}\label{mod32} E_j(m)=(m-1)sp(m(j-1),m) + E_{j-1}(m)\end{equation}
Iterating Equation \eqref{mod32} we obtain 
$$E_j(m)=(m-1)\sum_{h=1}^{u}sp(m(j-h),m) + E_{j-u}(m),\, 1\leq u\leq j-1.$$
In particular, the case $u=j-1$ with $E_1(m)=m-1\equiv 0\pmod{3}$, implies
$$E_j(m)\equiv 0\pmod 3.$$ 
Consequently, reducing Equation \eqref{mod31} modulo 3 gives 
$$\sum_{i = 1}^{mj+1}sp(i,m)\equiv \frac{1}{2}(0-1)+0\equiv 1\pmod{3}$$
which is the desired result.
\end{proof}

\begin{proof}[Second Proof of Theorem \ref{congmm2}]
By Theorem \ref{conjsolv1}, 
$$sp(m^2j+m+r,m) = sp(m(mj+1)+r,m) = 1+2 \sum_{i=1}^{mj+1}sp(i,m).$$
From Lemma \ref{lem3mod} the sum is congruent to $1$ modulo 3, say $3v+1$ for some $v$. Hence
$$sp(m^2j+m+r,m) = 1+2(3v+1)\equiv 0\pmod{3}.$$
This completes the proof.
\end{proof}

\section*{Acknowledgment} This research was supported by the Number Theory Focus Area Grant of DST-NRF Centre of Excellence in Mathematical and Statistical Sciences (CoE-MaSS) - Wits University.

\bibliographystyle{amsplain}

\end{document}